\newcommandx{\huom}[2][1=]{\todo[linecolor=red,backgroundcolor=red!10,bordercolor=red,#1]{#2}}
\newcommand{\ve}{\varepsilon}
\newcommand{\N}{\mathbb{N}}
\newcommand{\R}{\mathbb{R}}
\newcommand{\Ss}{\mathbb{S}}
\newcommand{\ang}[1]{\left\langle #1 \right\rangle}
\newcommand{\p}{\partial}
\DeclareMathOperator\dv{div}
\newcommand{\nb}{\bar{\nabla}}
\newcommand{\n}{\nabla}
\DeclareMathOperator\Hess{Hess}
\numberwithin{equation}{section}
\theoremstyle{plain}
\newtheorem{thm}{Theorem}[section]
\newtheorem{lem}[thm]{Lemma}
\newtheorem{cor}[thm]{Corollary}
\newtheorem{prop}[thm]{Proposition}
\newtheorem{rem}[thm]{Remark}
\theoremstyle{definition}
\newtheorem{exa}[thm]{Example}
\author[Jean-Baptiste Casteras]{Jean-Baptiste Casteras}
\address{Jean-Baptiste Casteras,
CMAFCIO, Faculdade de Ci\^encias da Universidade de Lisboa,Edificio C6, Piso 1, Campo Grande 1749-016 Lisboa, Portugal}
\email{jeanbaptiste.casteras@gmail.com}
\author[E. Heinonen]{Esko Heinonen}
\address{E. Heinonen,
Department of Mathematics and Statistics,
P.O. Box 35, 40014 University of Jyv\"askyl\"a, Finland.
}
\email{ea.heinonen@gmail.com}
\author[I. Holopainen]{Ilkka Holopainen}
\address{I. Holopainen,
Departament of Mathematics and Statistics, P.O. Box 68 (Pietari Kalmin katu 5),
 00014 University of Helsinki, 
  Finland.
}
\email{ilkka.holopainen@helsinki.fi}
\author[J. H. De Lira]{Jorge H. De Lira}
\address{J. H. De Lira,
  Departamento de Matem\'atica,
  Universidade Federal do Cear\'a, Bloco 914, Campus do Pici,
  Fortaleza, Cear\'a, 60455-760, Brazil.
}
\email{jorge.lira@mat.ufc.br}
\subjclass[2010]{Primary 53C21, 53C44}
\keywords{Mean curvature equation, translating graphs, Hadamard manifold}
\title{Translating solitons over Cartan-Hadamard manifolds}
\date{\today}
\begin{document}

\begin{abstract}
We prove existence results for entire graphical translators of the mean curvature flow (the so-called bowl solitons) on Cartan-Hadamard manifolds. We show that the asymptotic behaviour of entire solitons depends heavily on the curvature of the manifold, and that there exist also bounded solutions if the curvature goes to minus infinity fast enough. Moreover, it is even possible to solve the asymptotic Dirichlet problem under certain conditions. \iffalse In the last section of the paper we study the solutions of the soliton equation and a capillary type equation with prescribed contact angle condition. \fi
\end{abstract}

\maketitle

\tableofcontents

\section{Introduction}
In this paper we study the existence of translating solitons in Riemannian products $N\times\R$, where $N$ is an $n$-dimensional Cartan-Hadamard manifold, i.e. a complete, simply connected Riemannian manifold with non-positive sectional curvature. A submanifold $M$ of $N\times\R$ is a translating soliton of the mean curvature flow if
\[
cX^\perp =H,
\]
where $H$ is the mean curvature vector field, $X=\p_t$ is the standard coordinate vector field of $\R$, and $c\in\R$ is a constant that indicates the velocity of the flow.
Recall from \cite[Prop. 6]{LM} that, given a domain $\Omega\subset N$ and a $C^2$ function $u\colon\Omega\to\R$, the graph 
\[
M=\big\{\big(x,u(x)\big)\colon x\in \Omega\big\}\subset N\times\R
\]
is a translating soliton if and only if $u$ satisfies the quasilinear partial differential equation
\begin{equation}\label{eq-soliton}
	\dv \frac{\nabla u}{\sqrt{1+|\nabla u|^2}} =\frac{c}{\sqrt{1+|\nabla u|^2}}
	\end{equation}
	with some constant $c\in\R$.
	
In \cite{AW} Altschuler and Wu studied surfaces over a  convex domain in $\R^2$ that are evolving by the mean curvature flow and have prescribed contact angle with the boundary cylinder. They also proved the existence of a convex rotationally symmetric translating soliton over the entire plane. Clutterbuck, Schn\"urer, and Schulze \cite{CSS} constructed entire, rotationally symmetric, strictly convex graphical translating solitons in 
$\R^{n+1},\ n\ge 2$, known as bowl solitons. 
They also classified all translating solitons of revolution giving a one-parameter family of rotationally symmetric ``winglike" solitons $M_\varepsilon$, where $\varepsilon$ represents the neck-size of the winglike soliton $M_\varepsilon$ and the limit as $\varepsilon\to 0$ consists of a dounle copy of the bowl soliton. We recall that Wang \cite{Wang-convex} characterized the bowl soliton as the only convex translating solitons in $\R^{n+1}$ that is an entire graph. Spruck and Xiao \cite{Spruck-Xiao} proved that a translating soliton which is graph over the whole $\R^2$ must be convex and hence the bowl soliton.
In recent years several families of new translating solitons in the Euclidean space have been constructed by using different techniques, see \cite{davila}, \cite{HIMW}, \cite{HMW}, \cite{nguyen1}, \cite{nguyen2}, \cite{nguyen3}. For instance, in \cite{HIMW} Hoffman, Ilmanen, Mart\'in and White gave a full classification of complete translating graphs in $\R^3$ and constructed $(n-1)$-parameter families of new examples of translating graphs in $\R^{n+1}$.

In \cite{LM} de Lira and Mart\'in extended the constructions of Clutterbuck, Schn\"urer, and Schulze to 
rotationally symmetric
Cartan-Hadamard manifolds $N_\xi$ whose metric can be written as
	\[
	dr^2 + \xi(r)^2 d\vartheta^2,
	\]
where $r=r(x) = d(o,x)$ is the distance to a pole $o\in N_\xi$ and $d\vartheta^2$ is the metric on the unit sphere $\Ss^{n-1} \subset T_o N_\xi$. They proved the existence of a one-parameter family of rotationally symmetric translating solitons $M_\varepsilon,\ \varepsilon\in [0,+\infty)$, embedded into the Riemannian product $N_\xi\times \R$. The translating soliton $M_0$, again called the bowl soliton, is the graph of an entire solution to \eqref{eq-soliton}, whereas each $M_\varepsilon,\ \varepsilon>0$, is a bi-graph over the exterior of the geodesic ball $B(o,\varepsilon)\subset N_\xi$ and is called a winglike soliton. Notice that a radial function $u=u(r)$ is a solution to \eqref{eq-soliton} if and only if it satisfies the ODE
\[
\sqrt{1+u'^2}\left(\frac{u'}{\sqrt{1+u'^2}}\right)' +u'\Delta r=c,
\]
or equivalently,
\begin{equation}\label{radeq}
\frac{u''}{1+u'^2}+(n-1)\frac{\xi'(r)}{\xi(r)}u'-c=0,
\end{equation}
where the prime $'$ denotes derivatives with respect to the radial coordinate $r$. De Lira and Mart\'in
also constructed entire grim reaper graphs on a class of complete, not necessarily rotationally symmetric, Riemannian manifolds by using Fermi coordinates attached to a geodesic.
 
Our main motivation in this paper is to prove existence results for graphical translating solitons on Cartan-Hadamard manifolds that need not be rotationally symmetric.
We assume that the radial sectional curvatures of a Cartan-Hadamard manifold $N$ satisfy    
	\begin{equation}\label{curv-as}
	-b(r(x))^2 \le K(P_x) \le - a(r(x))^2,
	\end{equation}
where $r$ is the distance function to a fixed point $o\in N$, $a,b\colon[0,\infty)\to[0,\infty)$ are smooth functions, and $P_x$ is any $2$-plane in the tangent space $T_xN$ containing $(\partial_r)_x=\nabla r(x)$. We call $a$ and $b$ the radial curvature functions of $N$.
Notice that this is not a restriction at all since there are such functions for any Cartan-Hadamard manifold.
We will denote by $N_a$ and $N_b$ the rotationally symmetric model manifolds with radial sectional curvatures $-a(r)^2$ and $-b(r)^2$, respectively. Note that the Riemannian metric $g_a$ on $N_a$ (similarly on $N_b$) can be written as
	\[
g_a = dr^2 + f_a(r)^2d\vartheta^2,
	\]
where $f_a$ is the solution for the $1$-dimensional Jacobi equation
	\begin{equation}\label{eq-jacobi}
	\begin{cases}
	f_a(0) = 0 \\
	f_a'(0) = 1 \\
	f_a'' = a^2 f_a.
	\end{cases}
	\end{equation}

Next we describe the structure of our paper and state some of our results. The main theme in Section~\ref{sec-constr} is the search of global super- and subsolutions to \eqref{eq-soliton} that stay of bounded distance from each other and use them as barriers in construction of entire solutions. First we implement the rotationally symmetric bowl solitons from the models $N_a$ and $N_b$ obtained by de Lira and Mart\'in to the actual Cartan-Hadamard manifold $N$ and obtain solutions to \eqref{eq-soliton} on geodesic balls $B(o,R)\subset N$ with constant boundary values on $\p B(o,R)$.
In Subsection~\ref{subsec-asrot} we first improve the estimate of the asymptotic behavior of a rotationally symmetric solutions obtained in \cite{CSS} and \cite{LM}.   Then we consider asymptotically rotationally symmetric manifolds and utilize the bowl solitons in rotationally symmetric models as global upper and lower barriers. If the sectional curvature upper bound goes to minus infinity fast enough, there can be bounded entire solutions to \eqref{eq-soliton}. Indeed, we have the following:
\begin{thm}\label{thm-bounded-intro}
Suppose that 
\[
K(P_x)\le -a\big(r(x)\big)^2,
\]
where the curvature upper bound goes to $-\infty$ fast enough so that
\[
\lim_{t\to\infty}\frac{a^\prime (t)}{a(t)^2}=0\ 
\text{ and }\ 
\int_0^\infty\frac{f_a(t)}{f_a^\prime (t)}dt<\infty.
\]
Then there exists a translating soliton in $N\times\R$ that is the graph of an entire bounded solution $u\colon N\to\R$ 
to the equation \eqref{eq-soliton}.
\end{thm}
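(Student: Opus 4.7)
The plan is to exploit the rotationally symmetric model $N_a$ as a source of barriers. First, I use the construction of \cite{LM} on $N_a$ to obtain the radial bowl soliton $v_a\colon [0,\infty)\to\R$, solving
\[
\frac{v_a''}{1+(v_a')^2}+(n-1)\frac{f_a'(r)}{f_a(r)}\,v_a'=c,\qquad v_a(0)=v_a'(0)=0.
\]
The sharper asymptotic analysis promised in Subsection~\ref{subsec-asrot}, which uses $a'(t)/a(t)^2\to 0$ to show that $v_a''/(1+(v_a')^2)$ is negligible compared to the linear term, yields $v_a'(r)\sim c f_a(r)/((n-1)f_a'(r))$ as $r\to\infty$. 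The integrability assumption $\int_0^\infty f_a/f_a'\,dt<\infty$ then forces $v_a(r)\le V_\infty<\infty$, so the model bowl is entire and bounded.

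Next I transplant $v_a$ to $N$ by setting $\phi(x)=v_a(r(x))$, where $r(x)=d(o,x)$. The Laplacian comparison theorem, applied under $K(P_x)\le -a(r(x))^2$, gives $\Delta_N r(x)\ge (n-1)f_a'(r(x))/f_a(r(x))$. A direct computation then shows
\[
\dv\frac{\nabla \phi}{\sqrt{1+|\nabla \phi|^2}}-\frac{c}{\sqrt{1+|\nabla \phi|^2}}=\frac{1}{\sqrt{1+(v_a')^2}}\left(\frac{v_a''}{1+(v_a')^2}+v_a'\,\Delta_N r-c\right)\ge 0,
\]
since $v_a'\ge 0$. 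Thus $\phi$ is a subsolution of \eqref{eq-soliton}, and the same computation applied to $-v_a$ shows that $\psi(x)=-v_a(r(x))$ is a supersolution (taking $c>0$ without loss of generality; the case $c<0$ is symmetric and $c=0$ is trivial). On a geodesic ball $B(o,R)$ one then gets a subsolution $\phi_R:=\phi-v_a(R)$ and a supersolution $\psi_R:=\psi+v_a(R)$, both vanishing on $\partial B(o,R)$.

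I then invoke the Dirichlet-problem construction of Section~\ref{sec-constr} to obtain $u_R\in C^2(B(o,R))\cap C(\co{B(o,R)})$ solving \eqref{eq-soliton} with $u_R=0$ on $\partial B(o,R)$. Comparison against $\phi_R$ and $\psi_R$ yields
\[
-V_\infty\le v_a(r(x))-v_a(R)\le u_R(x)\le v_a(R)-v_a(r(x))\le V_\infty
\]
on $B(o,R)$, so $\{u_R\}$ is uniformly bounded. Standard interior gradient estimates for the soliton equation combined with Schauder theory then yield $C^{2,\alpha}_{\loc}$ bounds, and a diagonal subsequence $u_{R_k}$ converges in $C^2_{\loc}(N)$ to an entire solution $u$ of \eqref{eq-soliton} with $\sup_N|u|\le V_\infty$.

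The main obstacle is the first step: the ODE for $v_a$ is delicate because its attractor $cf_a/((n-1)f_a')$ decays to $0$ at exactly the critical rate making $\int v_a'\,dr$ a borderline integral, so a careful invariant-region or monotonicity argument is needed to show that $v_a'$ tracks this attractor tightly. The role of $a'/a^2\to 0$ is precisely to control the relative size of the $v_a''$ correction term in this asymptotic expansion; without this condition one cannot conclude that $\int v_a'\,dr$ inherits the convergence of $\int f_a/f_a'\,dr$.
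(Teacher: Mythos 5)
Your proposal is correct in its overall architecture and that architecture coincides with the paper's: a bounded radial subsolution transplanted to $N$ via the Laplace comparison $\Delta r\ge (n-1)f_a'/f_a$, paired with a supersolution, followed by solving Dirichlet problems on balls and extracting a locally $C^2$-convergent subsequence (as in Theorem~\ref{thm-globexist}). The genuine difference is in how the bounded subsolution is produced. You take the actual bowl soliton $v_a$ of the model $N_a$ and deduce its boundedness from the sharp asymptotics $v_a'\sim \frac{c}{n-1}\frac{f_a}{f_a'}$; the paper instead bypasses the model soliton entirely and writes down the explicit ansatz $v(x)=\frac{c_1}{n-1}\int_0^{r(x)} f_a/f_a'\,dt$, verifying the subsolution inequality directly because the error term reduces to $\frac{c_1}{n-1}\,(f_a/f_a')'\big/\bigl(1+(\tfrac{c_1}{n-1})^2(f_a/f_a')^2\bigr)$, which is controlled for large $c_1$ using only $(f_a/f_a')'\to 0$ from Lemma~\ref{lem2.6}(i). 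Your route therefore leans on heavier machinery, and you should be aware that the plain conclusion $v_a'=\frac{c}{n-1}\frac{f_a}{f_a'}+o(1)$ of Proposition~\ref{prop8} with $h\equiv 1$ is \emph{not} enough: an additive $o(1)$ error need not be integrable, so boundedness of $v_a$ genuinely requires the weighted version \eqref{comp-behav} with $1/h\in L^1$, e.g.\ $h=f_a'/f_a$, and hence the verification of hypotheses (ii)--(iii) of Proposition~\ref{prop8}; this in turn uses that $a(t)\to\infty$, which does follow from $a'/a^2\to 0$ together with $\int f_a/f_a'<\infty$ but is an extra argument you leave implicit. You flag this as the main obstacle, which is fair; the paper's explicit subsolution is precisely the device that removes it. Two minor remarks: your supersolution $-v_a$ works, but a constant suffices (and is what the paper uses); and your multiplicative asymptotics claim should be stated as the additive estimate with integrable error, which is what the comparison of $\int v_a'$ with $\int f_a/f_a'$ actually needs.
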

Furthermore, under suitable curvature bounds, it is even possible to solve the asymptotic Dirichlet problem for \eqref{eq-soliton}. 
For instance, if the radial sectional curvatures are bounded as 
\[
- 2\cosh\big(\cosh r(x)\big)\le K(P_x)\le
- \cosh^2 r(x) -\sinh r(x)\coth\big(\sinh r(x)\big)
 \]
 and we are given a continuous function 
 $\varphi\in C(\p_\infty N)$ on the sphere at infinity, there exists a unique entire solution $u\in C^2(N)\cap C(\overline{N})$ to
 \eqref{eq-soliton} with boundary values 
 $u|\partial_\infty N = \varphi$ at infinity. See 
 Theorem~\ref{thm-adp} in Subsection~\ref{subsec-adp} for the general result. 
In Subsection~\ref{subsec-gencase} we construct global super- and subsolutions whose difference goes to zero at infinity under certain (implicit) assumptions on the Riemannian metric that are more general than those in \ref{subsec-asrot}. Then we apply these barriers in construction of entire solutions. The following two corollaries provide examples of such suitable metrics. 
\begin{cor}\label{firstcor-intro}
Let $(N^2,g)$ be a 2-dimensional Cartan-Hadamard manifold  with the Riemannian metric $g =dr^2 + h(r,\theta)^2 d\theta^2$, where 
\[
h(r,\theta)=\frac{1}{a\cos^2 \theta +b\sin^2 \theta}\sinh (ar\cos^2 \theta +b r \sin^2 \theta).
\] 
Then there exists a bowl soliton on $(N^2,g)$.
\end{cor}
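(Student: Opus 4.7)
The plan is to reduce the corollary to the general existence theorem of Subsection~\ref{subsec-gencase} by checking its implicit hypotheses for the explicit metric. Writing $\alpha(\theta) = a\cos^2\theta + b\sin^2\theta$, the warping function takes the form $h(r,\theta) = \sinh(\alpha(\theta)r)/\alpha(\theta)$, so $h_r = \cosh(\alpha r)$ and $h_{rr} = \alpha^2 h$. Since in dimension two the Gaussian curvature of such a warped product is $K = -h_{rr}/h$, a direct computation yields
\[
K(r,\theta) = -\alpha(\theta)^2 \in \big[-\max(a,b)^2,\ -\min(a,b)^2\big].
\]
In particular, the radial sectional curvature bounds \eqref{curv-as} hold with the constant functions $a$ and $b$.

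The key structural remark is that along each radial ray $\theta = \mathrm{const}$ the warping factor $h(\cdot,\theta)$ coincides with the Jacobi function $f_{\alpha(\theta)}$ of the rotationally symmetric model of constant curvature $-\alpha(\theta)^2$. Consequently, the bowl solitons on the models $N_a$ and $N_b$ constructed by de Lira and Mart\'in furnish the natural candidates for the global upper and lower barriers demanded by Subsection~\ref{subsec-gencase}. The verification then splits into checking the comparability $f_a \le h \le f_b$ (assuming $a \le b$), controlling $h_r$ together with its angular derivatives, and confirming that the barrier difference decays to zero at infinity. Because $\alpha(\theta)$ takes values in the compact interval $[a,b]$ and depends smoothly on $\theta$, each of these checks reduces to an elementary estimate on hyperbolic functions applied separately at each $\theta$.

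The main obstacle I expect is isolating the precise form of the implicit hypotheses in Subsection~\ref{subsec-gencase} and translating them into pointwise analytic conditions on $h$; once this bookkeeping is done, the verification is essentially calculus. A secondary subtlety is handling the angular dependence $\alpha = \alpha(\theta)$ when patching the sub- and supersolutions together, since the rotationally symmetric analysis of de Lira and Mart\'in is purely radial and the barrier one builds on $N^2$ must be globally $C^0$ (and a supersolution in the viscosity sense) despite its angle-dependent slope. With the barriers in place, the bowl soliton on $(N^2,g)$ is then produced as the limit of Dirichlet solutions on an exhausting family $B(o,R_k)$ of geodesic balls by the standard combination of barrier arguments, interior gradient estimates, and a diagonal extraction, exactly as in the general framework of Section~\ref{sec-constr}.
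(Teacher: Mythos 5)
Your curvature computation is correct ($K=-\alpha(\theta)^2$ with $\alpha(\theta)=a\cos^2\theta+b\sin^2\theta$, so the curvature is pinched between two negative constants), but the core of your barrier construction does not work and misses the actual content of the proof. You propose to use the rotationally symmetric bowl solitons $u_a$ and $u_b$ of the constant-curvature models $N_a$ and $N_b$ as the global upper and lower barriers and to ``confirm that the barrier difference decays to zero at infinity.'' That confirmation fails whenever $a\neq b$: by the asymptotics of Proposition~\ref{prop8}, $u_a'(r)\to \frac{c}{(n-1)a}$ and $u_b'(r)\to\frac{c}{(n-1)b}$ (since $f_a/f_a'=\tanh(ar)/a\to 1/a$), so $|u_a(r)-u_b(r)|$ grows \emph{linearly} in $r$. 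This is exactly the obstruction recorded in Remark~\ref{rem-assym}, and it is the reason Subsection~\ref{subsec-gencase} exists at all: when the two curvature bounds do not merge at infinity, the two model solitons cannot pinch the solution.

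The paper instead verifies the hypotheses of Proposition~\ref{prop-main} directly. The leading-order barrier is the genuinely non-radial function $v_0(x)=c\int_0^{r(x)}(h/h_r)(\gamma(t))\,dt$, which along the ray of angle $\theta$ grows like $cr/\alpha(\theta)$ --- an angle-dependent slope that no rotationally symmetric function can match up to bounded error. One then estimates $(v_0)_\theta,(v_0)_{\theta\theta},(v_0)_{r\theta}=O(r)$ and uses the exponential growth of $h$ to show that all angular and cross terms in the operator $M$ (the $\Delta^{S(r)}$ part and the nonradial parts of $H$) are exponentially small, giving $|M(v_0)|\le(\max\{e^{-ar},e^{-br}\})^{\tilde c}$; the correction $v_1=\int_{r}^{\infty}M(v_0)\,h/h_r$ then tends to zero, and the asymptotic barriers $v_0+(1\pm\varepsilon)v_1$ differ by $2\varepsilon v_1\to 0$. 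The rotationally symmetric model solutions enter only to produce the strict radial sub- and supersolutions $F_\pm$ used in the cut-off gluing on a compact set. Your plan of applying the rotationally symmetric analysis ``separately at each $\theta$'' omits precisely the hard part: once the barrier depends on $\theta$, its angular derivatives enter the PDE and must be shown negligible, which is the bulk of the paper's argument.
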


\begin{cor}\label{secondcor-intro}
Let $(N^2,g)$ be a 2-dimensional Cartan-Hadamard manifold  with the Riemannian metric $g =dr^2 + h(r,\theta)^2 d\theta^2$, where 
\[
h(r,\theta)=\cos^2 \theta f_a (r) + \sin^2 \theta f_b (r),
\]
 where $f_b''(r)=\frac{\beta^2}{r^2}f_b(r)$, 
  $f_a'' (r)= \frac{\alpha^2}{r^2} f_a (r)$, with  $\beta>\alpha>4\sqrt{5}$. Then there exists a bowl soliton on $(N^2,g)$.
\end{cor}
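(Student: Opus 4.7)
The proof proposes to derive this corollary from the general existence theorem of Subsection~\ref{subsec-gencase}, which constructs entire bowl solitons on Cartan-Hadamard manifolds that admit global super- and subsolutions of \eqref{eq-soliton} whose gap at infinity vanishes. Thus the task is to check these implicit hypotheses for the metric $g=dr^2+h(r,\theta)^2 d\theta^2$ with $h(r,\theta)=\cos^2\theta\,f_a(r)+\sin^2\theta\,f_b(r)$.

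First I would verify the underlying geometry. In dimension two the Gaussian curvature equals $K=-h_{rr}/h$, and plugging in the ODEs $f_a''=(\alpha^2/r^2)f_a$, $f_b''=(\beta^2/r^2)f_b$ gives
\[
h_{rr}(r,\theta)=\frac{1}{r^2}\bigl(\alpha^2\cos^2\theta\, f_a(r)+\beta^2\sin^2\theta\, f_b(r)\bigr)\ge 0,
\]
so $K\le 0$. A weighted-average estimate in $\theta$ then yields
\[
-\frac{\beta^2}{r^2}\le K(r,\theta)\le -\frac{\alpha^2}{r^2},
\]
so that the radial curvature bounds \eqref{curv-as} hold with $a(r)=\alpha/r$ and $b(r)=\beta/r$ (after a smooth modification near the origin), and the associated rotationally symmetric models $N_a$, $N_b$ admit bowl solitons $u_a$, $u_b$ by \cite{LM}.

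The heart of the proof is to exhibit global barriers on $N$ adapted to the angular structure of $h$. A natural first attempt is to use the radial lifts of $u_a$ and $u_b$: since $h_r/h$ is an angular weighted average of $f_a'/f_a$ and $f_b'/f_b$, a direct substitution into \eqref{eq-soliton} identifies the lift of $u_b$ as a subsolution and the lift of $u_a$ as a supersolution on $N$. However, because $u_a, u_b$ grow quadratically with distinct leading coefficients $c/(2\gamma_a)$ and $c/(2\gamma_b)$, their difference is unbounded, and one must refine the barriers, either via the sharpened radial expansions developed in Subsection~\ref{subsec-asrot} or through $\theta$-dependent corrections, to force the gap to tend to zero at infinity. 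The quantitative threshold $\beta>\alpha>4\sqrt{5}$ enters at precisely this point: it is the condition that renders the remainder integrals in the asymptotic analysis of the nonlinear ODE \eqref{radeq} convergent, so that the refined upper and lower barriers can be matched to leading order and the gap vanishes.

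Once matched barriers are in place, invoking the Perron-type construction of Subsection~\ref{subsec-gencase} produces an entire solution of \eqref{eq-soliton} on $N$ sandwiched between them, yielding the desired bowl soliton. The principal technical obstacle, in my view, is this asymptotic-matching step: extracting from \eqref{radeq}, applied with the power-law warpings $f_a(r)=r^{\gamma_a}$ and $f_b(r)=r^{\gamma_b}$ where $\gamma_a$ and $\gamma_b$ are the positive roots of $\gamma(\gamma-1)=\alpha^2$ and $\gamma(\gamma-1)=\beta^2$, expansions sharp enough to control the difference of the corrected barriers, and identifying $4\sqrt{5}$ as the sharp summability threshold in this expansion. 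Routine but careful bookkeeping of the resulting power-series remainders should then complete the verification and thus the proof.
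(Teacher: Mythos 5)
Your high-level framing is right: the corollary is proved in the paper by verifying the hypotheses of Proposition~\ref{prop-main} and then invoking Theorem~\ref{thm-main2}, and your curvature computation $-\beta^2/r^2\le K\le -\alpha^2/r^2$ is consistent with what the paper asserts. But the proposal stops exactly where the actual proof begins. The matched barriers are \emph{not} obtained by refining the radial lifts $u_a$, $u_b$ of the model solitons: since $f_a/f_a'\approx r/\gamma_a$ and $f_b/f_b'\approx r/\gamma_b$ with $\gamma_a\neq\gamma_b$, the gap $|u_b-u_a|$ diverges quadratically, and no asymptotic sharpening of two genuinely different radial profiles can close it (this is precisely the obstruction recorded in Remark~\ref{rem-assym}). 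What the paper does instead is build $\theta$-dependent iterates from the actual metric, $v_0(x)=c\int_0^{r(x)}(h/h_r)\,dt$, then $v_1$, $v_2$, estimates all radial and angular derivatives ($|v_0|,|(v_0)_\theta|,|(v_0)_{\theta\theta}|\le Cr^2$, $|(v_0)_r|,|(v_0)_{r\theta}|\le Cr$, $|(v_0)_{rr}|\le C$, and similarly for $v_1$), and shows $M(v_0)\approx r^{-2}$, $M(v_0+v_1)=O(r^{-4})$ and $v_2\to 0$, so that Proposition~\ref{prop-main} applies with $i_0=2$; the functions $F_\pm$ are then taken from the constant-curvature comparison models of \cite{LM}. (A minor slip: your sub/supersolution orientation is reversed; by the Laplace comparison in Lemma~\ref{sub-sup-lemma} the lift of $u_a$ is the subsolution and that of $u_b$ the supersolution.)

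More seriously, your diagnosis of the hypothesis $\beta>\alpha>4\sqrt{5}$ is off target. It is not a summability threshold for remainder integrals of the radial ODE \eqref{radeq}: for a power-law warping the radial asymptotics of Propositions~\ref{prop8} and~\ref{prop8x} hold for every $\alpha>0$, and the integral defining $v_2$ converges regardless. The condition is there to guarantee $r^5/h=o(1)$ (display \eqref{hsmalle1}), which is what makes the \emph{angular} contributions negligible: the products of derivatives of $v_0$ entering $\Delta^{S(r)}v_0$ and $H(v_0)$ grow like $r^4$ to $r^6$ and must be killed by the factors $h^{-2}$, $h^{-3}$, $h^{-4}$. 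Without that, the whole strategy of neglecting the non-radial terms collapses, and this is the one place where the numerical bound enters. (Incidentally, your indicial equation $\gamma(\gamma-1)=\alpha^2$ is the correct one and would give $\gamma>5$ already for $\alpha>2\sqrt{5}$; the paper's stated exponent $(1+\sqrt{1+\alpha^2})/2$ corresponds to $\gamma(\gamma-1)=\alpha^2/4$ and is what calibrates the threshold to $4\sqrt{5}$.)
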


\subsubsection*{Acknowledgements}
We would like to thank Francisco Mart\'in for several useful discussions during the preparation of the paper.

\iffalse
\subsubsection*{Data availability statement}
The authors confirm that the data supporting the findings of this study are available within the article.\fi
\section{Construction of entire solutions}
\label{sec-constr}
In this section we search conditions on $N$ that guarantee the existence of an entire solution to \eqref{eq-soliton}. We start with the following application of the Laplace comparison theorem and the existence result of de Lira and Mart\'in for rotationally symmetric models. 

\begin{lem}[Sub- and supersolutions]\label{sub-sup-lemma}
Assume that $N$ satisfies \eqref{curv-as}. Then there exist entire radial sub- and supersolutions to the equation \eqref{eq-soliton} for every $c\in\R$.
\end{lem}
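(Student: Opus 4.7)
The plan is to import the rotationally symmetric bowl solitons of the models $N_a$ and $N_b$ into $N$ as radial candidates, and then deploy the Laplace comparison theorem to see that they become one-sided barriers for \eqref{eq-soliton}. For any radial $C^2$ function $u(x)=v(r(x))$ on $N$ with $v'(0)=0$, the identity $|\nabla r|=1$ gives on $N\setminus\{o\}$
\[
\mathcal{Q}v := \dv\frac{\nabla v}{\sqrt{1+|\nabla v|^2}} - \frac{c}{\sqrt{1+|\nabla v|^2}} = \frac{1}{\sqrt{1+v'^2}}\left(\frac{v''}{1+v'^2} + v'\Delta r - c\right).
\]
Since $\nabla v$ vanishes at $o$, any differential inequality $\mathcal{Q}v\ge 0$ or $\mathcal{Q}v\le 0$ obtained on $N\setminus\{o\}$ extends continuously to the pole.

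By \cite{LM}, for every $c\in\R$ the ODE \eqref{radeq} with $\xi=f_a$ (respectively $\xi=f_b$) admits an entire $C^2$ solution $v_a$ on $[0,\infty)$ (respectively $v_b$) normalized so that $v_a(0)=v_a'(0)=0$. A short phase-plane argument directly from \eqref{radeq} shows that $v_a'$ and $v_b'$ never vanish on $(0,\infty)$ and carry the sign of $c$. The Laplace comparison theorem applied with the curvature bounds \eqref{curv-as} yields
\[
(n-1)\frac{f_a'(r)}{f_a(r)} \le \Delta r \le (n-1)\frac{f_b'(r)}{f_b(r)}
\]
on $N\setminus\{o\}$. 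Substituting into the formula above and using that $v_a$ satisfies its model ODE,
\[
\frac{v_a''}{1+v_a'^2} + v_a'\Delta r - c = v_a'\left(\Delta r - (n-1)\frac{f_a'}{f_a}\right),
\]
whose sign coincides with that of $v_a'$, hence with that of $c$; the analogous identity with $b$ in place of $a$ produces the opposite sign. Therefore, for $c\ge 0$ the lift of $v_a$ is a subsolution and the lift of $v_b$ is a supersolution of \eqref{eq-soliton} on $N$; for $c\le 0$ the roles of $v_a$ and $v_b$ are interchanged; and $c=0$ is trivial, since any constant solves \eqref{eq-soliton}.

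No real obstacle arises beyond verifying the sign of $v_a'$ and $v_b'$ and handling the pole. Smoothness at $o$ is automatic because the lifts are radial $C^2$ functions with vanishing derivative at the pole, so the inequalities above extend there by continuity. In this sense the lemma reduces to the model-space existence from \cite{LM} combined with the Laplace comparison theorem.
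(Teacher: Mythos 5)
Your proposal is correct and follows essentially the same route as the paper: lift the rotationally symmetric bowl solitons of the models $N_a$, $N_b$ from \cite{LM} to $N$ as radial functions, and use the Laplace comparison $(n-1)f_a'/f_a\le\Delta r\le(n-1)f_b'/f_b$ together with the sign of the radial derivative to conclude they are sub- and supersolutions. The only cosmetic difference is that you justify $v_a'\ge 0$ by a phase-plane argument on the ODE while the paper invokes the maximum principle on $N_a$; both are valid.
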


\begin{proof}
We prove the claim for subsolutions in the case $c>0$, the other cases are similar. Let $u_a\colon N_a\to \R$ be the radial solution on $N_a$ to \eqref{eq-soliton} given by \cite[Theorem 7]{LM}.
  Using $u_a$ we define a radial function, also denoted by $u_a$, on $N$ by setting $u_a(x)=u_a(r(x))$, where $r(x)$ is the distance to the fixed point $o\in N$.
 Hence  $\nabla u_a = u_a' \nabla r$. Applying the Laplace comparison $\Delta r \ge \Delta_a r$, where $\Delta_a$ denotes the Laplace-Beltrami operator on $N_a$, and denoting $W_a=\sqrt{1+|\nabla u_a|^2}$ 
  we obtain
	\begin{align*}
	\dv \frac{\nabla u_a}{W_a} &- \frac{c}{W_a} 
	= \frac{\dv \nabla u_a}{W_a} + \ang{\nabla u_a, \nabla (W_a)^{-1}} - \frac{c}{W_a} \\
	&= \frac{u_a' \Delta r + u_a''}{W_a} + \ang{\nabla u_a, \nabla (W_a)^{-1}} - \frac{c}{W_a} \\
	&\ge \frac{u_a' \Delta_{a} r + u_a''}{W_a} + g_a\left(\nabla u_a, \nabla (W_a)^{-1}\right) - \frac{c}{W_a}\\
&=\frac{1}{W_a}\left(\frac{u_a''}{1+u_a'^2}+u_a'\Delta_a r -c\right)	
	 = 0.
	\end{align*} 
Above we used the fact $u_a'\ge 0$ that follows from the maximum principle since the radial solution $u_a$ on $N_a$ can not have interior maxima. Similarly, the entire radial solution $u_b$ on $N_b$ yields an entire supersolution on $N$. In fact, any constant function on $N$ is a supersolution for $c\ge 0$.
\end{proof}

From now on we assume, without loss of generality, that the constant $c$ in \eqref{eq-soliton} is nonnegative. 
The sub- and supersolutions $u_a$ and, respectively, $u_b$ can be uses as barriers in order to obtain solutions in geodesic balls with constant boundary values.

\begin{lem}\label{lem-ballexist}
For every geodesic  ball $B=B(o,R)\subset N$ and a constant  $m\in\R$ there exists a function 
$u\in C^2(B)\cap C(\bar{B})$ that solves the equation 
\begin{equation}\label{dprobinB}
 \begin{cases}
  \dv \dfrac{\nabla u}{\sqrt{1+|\nabla u|^2}} = \frac{c}{\sqrt{1+|\nabla u|^2}} \quad \text{in } B \\
  u|\p B = m.
 \end{cases}
\end{equation}
\end{lem}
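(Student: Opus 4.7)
I would reduce to $m=0$ by the translation invariance $u\mapsto u-m$ of equation~\eqref{eq-soliton}, and then combine the sub-/supersolutions from Lemma~\ref{sub-sup-lemma} with a standard continuity/Leray--Schauder argument. Since $u_a$ is radially nondecreasing (as noted in the proof of Lemma~\ref{sub-sup-lemma}), the shifted radial function $\underline{u}(x):=u_a(r(x))-u_a(R)$ remains a subsolution of \eqref{eq-soliton} and satisfies $\underline{u}|\p B = 0$, $\underline{u}\le 0$ on $\bar{B}$; the constant $\bar{u}\equiv 0$ is a supersolution, since $c\ge 0$. By the comparison principle for \eqref{eq-soliton} (divergence form, monotone right-hand side in $u$), any solution with zero boundary values is sandwiched: $\underline{u}\le u\le 0$.

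To produce such a solution, I would run the continuity method on the family
\[
\dv\frac{\nabla u_\tau}{\sqrt{1+|\nabla u_\tau|^2}} = \frac{\tau c}{\sqrt{1+|\nabla u_\tau|^2}}\text{ in }B,\quad u_\tau|\p B=0,\quad \tau\in[0,1],
\]
with the trivial solution $u_0\equiv 0$ at $\tau=0$. Applying Lemma~\ref{sub-sup-lemma} with $c$ replaced by $\tau c$ yields $\tau$-uniform sub-/supersolutions, hence a $\tau$-uniform $C^0$ bound. The boundary gradient estimate follows from the sandwich: since $\underline{u}\le u_\tau\le 0$ with equality on $\p B$, the inward normal derivative of $u_\tau$ is pinched between that of $\underline{u}$ and $0$, while tangential derivatives vanish. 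The interior gradient estimate follows from a Korevaar-type maximum principle argument applied to an auxiliary function of the form $\eta W$ with $W=\sqrt{1+|\nabla u_\tau|^2}$ and a suitable cutoff $\eta$; this is a standard ingredient for prescribed mean curvature type equations with bounded right-hand side, and carries over to the Riemannian setting at the cost of absorbing curvature terms. Together these give $\tau$-uniform $C^{1,\alpha}(\bar{B})$ bounds, whence Leray--Schauder (Gilbarg--Trudinger, Thm.~11.8) produces a solution at $\tau=1$; Schauder bootstrapping upgrades it to $u\in C^{2,\alpha}(\bar{B})\subset C^2(B)\cap C(\bar{B})$.

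The main obstacle is the interior gradient estimate, because \eqref{eq-soliton} degenerates in its principal part as $|\nabla u|\to\infty$, so no uniform ellipticity is available and one must exploit the geometric structure of the equation together with a lower curvature bound on $B$. All the required ingredients are by now standard, but they need some care in the Riemannian (rather than Euclidean) setting; in particular, one cannot simply quote a Euclidean reference verbatim.
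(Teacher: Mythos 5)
Your proposal is correct and follows essentially the same route as the paper: the radial sub-/supersolutions of Lemma~\ref{sub-sup-lemma}, normalized to the boundary value, serve as barriers giving the boundary gradient estimate, a Korevaar-type maximum-principle argument (this is exactly what the cited \cite[Lemma 2.3]{CHH2} provides) gives the interior gradient estimate, and Leray--Schauder closes the argument. The only cosmetic differences are that you use the constant $0$ as the upper barrier instead of $u_b$ (both work, since the paper assumes $c\ge 0$ and notes that constants are supersolutions) and that you spell out the continuity family explicitly.
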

\begin{proof}
Let $u_a$ and $u_b$ be the radial sub- and supersolutions given by Lemma \ref{sub-sup-lemma}. By adding suitable constants we may assume that $u_a=u_b=m$ on $\partial B$.
Then $u_a$ is a lower barrier and $u_b$ is an upper barrier in $B$, and therefore we obtain a priori boundary gradient estimate for the Dirichlet problem \eqref{dprobinB}. By \cite[Lemma 2.3]{CHH2} we have a priori interior gradient estimate, hence the existence of a solution to \eqref{dprobinB} follows from the 
Leray-Schauder method \cite[Theorem 13.8]{GT}.
\end{proof}

\begin{rem}
Although $u_a$ and $u_b$ are global sub- and supersolutions it seems difficult to use them as global barriers. The difficulty being that the difference 
$|u_b - u_a|$ remains bounded only in a very special case of asymptotically rotationally symmetric manifolds; see 
Lemma~\ref{lem-sub-sup-diff} and Remark~\ref{rem-assym}.
\end{rem}

\subsection{Asymptotically rotationally symmetric case}
\label{subsec-asrot}
Next we will prove the existence of entire solutions of \eqref{eq-soliton} under assumptions on the asymptotic behaviour of the radial sectional curvatures of $M$. For this we first slightly improve the estimate about the asymptotic behaviour of the rotationally symmetric solutions obtained in \cite{LM}; see also \cite{CSS}.

\begin{prop}\label{prop8}
Let $N$ be a complete rotationally symmetric Riemannian manifold whose radial sectional curvatures satisfy
	\[
	K(P_x) = -\frac{\xi''(r(x))}{\xi(r(x))} \le 0.
	\]
Suppose, furthermore, that
\begin{enumerate}
\item[(i)]
\[
\left( \frac{\xi}{\xi'} \right)'=
o\left(\min\left\lbrace\max\left\lbrace 1,
\left(\frac{\xi}{\xi'}\right)^2\right\rbrace,\max\left\lbrace\frac{\xi}{\xi'},\frac{\xi'}{\xi}\right\rbrace\right\rbrace\right)
\]
\item[(ii)]
\[
 h\left( \frac{\xi}{\xi'} \right)'=o\left(\max\left\lbrace\frac{\xi}{\xi'},\frac{\xi'}{\xi}\right\rbrace\right) 
\]
\item[(iii)]
\[
\frac{h'}{h}=o\left(\max\left\lbrace\frac{\xi'}{\xi},\frac{\xi}{\xi'}\right\rbrace\right) 
\]
\end{enumerate}
	as $r\to \infty$ for some smooth positive $h$.
	Then the rotationally symmetric translating solitons $M_\varepsilon$, $\varepsilon\in [0,+\infty)$, are described, outside a cylinder over a geodesic ball $B_R(o)\subset N$, as graphs or bi-graphs of functions with the following asymptotic behavior
	\begin{equation}\label{comp-behav}
	u'(r) = \frac{c}{n-1} \frac{\xi(r)}{\xi'(r)} + 
	o\left(\frac{1}{h(r)}\right)
	\end{equation}
	as $r\to +\infty$.
\end{prop}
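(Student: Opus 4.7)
The plan is to linearise the radial equation \eqref{radeq} around the guessed asymptote
\[
F(r) := \frac{c}{n-1}\frac{\xi(r)}{\xi'(r)},
\]
which is precisely the value at which the non-derivative terms of \eqref{radeq} cancel. The analysis starts from a rough a priori bound $u'(r) = O\bigl(\xi(r)/\xi'(r)\bigr)$ (essentially the estimate of \cite{CSS,LM} being refined here) combined with the sign dichotomy that $u''(r) \ge 0$ if and only if $u'(r) \le F(r)$, which is immediate upon rewriting \eqref{radeq} as $u''/(1+u'^2) = (n-1)(\xi'/\xi)\bigl(F - u'\bigr)$.

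Next, set $v(r) := u'(r) - F(r)$. Substituting into \eqref{radeq} yields the first-order linear ODE
\[
v'(r) + A(r)\,v(r) = -F'(r), \qquad A(r) := (n-1)\frac{\xi'(r)}{\xi(r)}\bigl(1+u'(r)^2\bigr).
\]
The rough bound on $u'$ gives $A(r) \asymp \max\{\xi'(r)/\xi(r),\ \xi(r)/\xi'(r)\}$ (the two regimes depending on whether $\xi/\xi'$ is bounded or unbounded), so $\int^{r} A \to +\infty$. Writing $\Phi(r) := \exp\bigl(\int_{r_0}^{r} A\bigr)$, variation of constants gives
\[
v(r) = \frac{\Phi(r_0)}{\Phi(r)}\,v(r_0) \;-\; \frac{1}{\Phi(r)}\int_{r_0}^{r} \Phi(s)\, F'(s)\, ds.
\]
The initial-value contribution decays like $1/\Phi(r)$; since hypothesis~(iii) forces $\log h = o\bigl(\int A\bigr)$, we obtain $h(r)/\Phi(r)\to 0$, so this piece is already $o(1/h)$.

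The main work is the convolution. Integrating by parts with $\Phi' = A\Phi$,
\[
\frac{1}{\Phi(r)}\int_{r_0}^{r}\!\Phi F'\,ds = \frac{F'(r)}{A(r)} \;-\; \frac{\Phi(r_0)}{\Phi(r)}\frac{F'(r_0)}{A(r_0)} \;-\; \frac{1}{\Phi(r)}\int_{r_0}^{r}\!\Phi(s)\Bigl(\frac{F'}{A}\Bigr)'(s)\,ds.
\]
The leading term equals, up to constants, $(\xi/\xi')'\big/\max\{\xi/\xi',\,\xi'/\xi\}$, and hypothesis~(ii) delivers exactly $F'(r)/A(r) = o(1/h)$. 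The remainder involves $(\xi/\xi')''$ together with cross terms proportional to $A'/A$; hypothesis~(i), with its shape $\min\bigl\{\max\{1,(\xi/\xi')^2\},\ \max\{\xi/\xi',\xi'/\xi\}\bigr\}$, is tailored to dominate $(F'/A)'$ uniformly and to make the iterated integration by parts self-consistent, so that the remainder is again $o(1/h)$.

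The main obstacle, in my view, is handling the two asymptotic regimes uniformly. When $\xi/\xi'\to\infty$ (Euclidean-like) the factor $1+u'^2$ in $A$ grows like $(\xi/\xi')^2$, while when $\xi/\xi'$ is bounded it stays bounded, and the three hypotheses are phrased using $\max\{\xi/\xi',\xi'/\xi\}$ precisely because the effective scale of $A$ differs between the regimes. Either a careful case split, or a unified argument working throughout with the quantity $\max\{\xi/\xi',\xi'/\xi\}$ and the sign dichotomy above, should close the proof and yield the rate \eqref{comp-behav}.
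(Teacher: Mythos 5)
Your strategy---variation of constants for $v=u'-F$ with the integrating factor $\Phi=\exp\int A$---is genuinely different from the paper's. The paper never inverts the ODE: it first traps $\varphi=u'$ between $(1\pm\epsilon)\tfrac{c}{n-1}\tfrac{\xi}{\xi'}$ by comparison with explicit sub/supersolutions (this is where hypothesis (i) is actually consumed), and then runs a sign argument twice, once on $\psi=\varphi-F$ and once on $\lambda=h\psi$, showing that $\lambda\le-\delta$ at large $r$ forces $\lambda'\ge\tilde\delta>0$ there, which is incompatible with $\liminf\lambda<0$. Your Duhamel route is viable in principle, and the first integration by parts plus the estimate $F'/A=o(1/h)$ from (ii) is correct, but two steps are not sound as written.

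First, the two-sided bound $u'\asymp\xi/\xi'$ that you need to identify the size of $A$ is not available off the shelf: \cite{CSS} is Euclidean and \cite{LM} works under different hypotheses, and establishing this bound under (i) is precisely the first half of the paper's proof, so it cannot simply be cited. Second, and more seriously, the remainder after your integration by parts involves $(F'/A)'$, hence $F''=\tfrac{c}{n-1}(\xi/\xi')''$ and $A'$ (which, since $A$ contains $1+u'^2$, feeds back $u''$ and therefore the unknown $v$ itself). Hypothesis (i) controls only the first derivative $(\xi/\xi')'$ and says nothing about $(\xi/\xi')''$---compare Proposition \ref{prop8x}, where exactly such a second-derivative hypothesis has to be added for a finer expansion---so the claim that (i) ``dominates $(F'/A)'$ uniformly'' has no basis. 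The repair is to stop after the first step: hypothesis (ii) gives $F'=o(A/h)=o(\Phi'/(\Phi h))$, so the convolution is $\frac{1}{\Phi(r)}\int_{r_0}^r o\bigl(\Phi'(s)/h(s)\bigr)\,ds$; using (iii) (i.e.\ $h'/h=o(A)$, so $\Phi/h$ is eventually increasing) one gets $\frac{1}{\Phi(r)}\int_{r_0}^r\Phi'/h\le(1+o(1))/h(r)+O(1/\Phi(r))=O(1/h(r))$, and splitting the integral at a large intermediate radius upgrades this to $o(1/h)$. With that replacement, and with the trapping step actually supplied rather than cited, your argument closes.
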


\begin{proof}
Denoting $\varphi=u'$ the equation \eqref{radeq} becomes 
\[
\varphi'(r)=\big(1+\varphi^2(r)\big)\left(c-(n-1)
\frac{\xi'(r)}{\xi(r)}\varphi\right) =:F\big(r,\varphi(r)\big).
\]
For $\epsilon>0$ we denote
\[
\zeta(r)=(1-\epsilon)\frac{c}{n-1}\frac{\xi(r)}{\xi'(r)}.
\]
We claim that for every given $\epsilon>0$ and $r_0>R$ there exists $r_1>r_0$ such that
\[
\zeta(r_1)<\varphi(r_1).
\]
Indeed, if this were not the case, there would exist $\epsilon>0$ and $r_0>R$ such that 
\[
\varphi(r)\le (1-\epsilon)\frac{c}{n-1}\frac{\xi(r)}{\xi'(r)}
\]
for every $r>r_0$. In this case we would have
\[
\varphi'(r)\ge c\epsilon \big(1+\varphi^2(r)\big),\quad
r>r_0
\]
which implies that
\[
c\epsilon(r-r_\ast)\le \arctan\varphi(r)-\arctan\varphi(r_\ast)
\]
for all $r>r_\ast>r_0$.
This contradicts the fact that the solution is entire and hence 
$r\to+\infty$.

Next we claim that 
\[
\zeta'(r)< F\big(r,\zeta(r)\big)
\] 
for all sufficiently large $r>R$. Indeed, we have
\[
F\big(r,\zeta(r)\big)=c\epsilon\left(1+(1-\epsilon)^2\frac{c^2}{(n-1)^2}\left(\frac{\xi(r)}{\xi'(r)}\right)^2\right)
> (1-\epsilon)\frac{c}{n-1}\left(\frac{\xi(r)}{\xi'(r)}\right)'
\]
if and only if 
\[
(n-1)^2 +(1-\epsilon)^2c^2\left(\frac{\xi(r)}{\xi'(r)}\right)^2
>
\left(\frac{1}{\epsilon}-1\right)(n-1)\left(\frac{\xi(r)}{\xi'(r)}\right)'
\]
that holds true for all sufficiently large $r>R$ 
by the assumption (i).  
Then adjusting $r_1>r_0$ to be sufficiently large we conclude from a standard comparison argument for nonlinear ODEs that
\[
\zeta(r)<\varphi(r)
\]
for every $r>r_1$, with $r_1>r_0>R$ sufficiently large. We conclude that for every given $\epsilon>0$ and $r_0>R$ there exists $r_1>r_0$ such that
\[
(1-\epsilon)\frac{c}{n-1}\frac{\xi(r)}{\xi'(r)}<\varphi(r)
\]
for all $r>r_1$.

Similarly, given $\epsilon>0$ we denote
\[
\eta(r)=(1+\epsilon)\frac{c}{n-1}\frac{\xi(r)}{\xi'(r)}
\]
Again we claim that for every $\epsilon>0$ and $r_0>R$ there exists $r_1>r_0$ such that 
\[
\eta(r_1)>\varphi(r_1).
\]
Otherwise, we could find $\epsilon>0$ and $r_0>R$ such that
\[
\varphi(r)>(1+\epsilon)\frac{c}{n-1}\frac{\xi(r)}{\xi'(r)}
\]
for every $r>r_0$. Consequently,
\[
\varphi'(r)<-c\epsilon\big(1+\varphi^2(r)\big),\quad
r>r_\ast>r_0,
\]
leading to a contradiction
\[
\arctan\varphi(r)-\arctan\varphi(r_\ast)<-c\epsilon(r-r_\ast),
\quad r<r_\ast<r_0.
\]
Next we prove that 
\begin{align*}
\eta'(r)&=(1+\epsilon)\frac{c}{n-1}\left(\frac{\xi(r)}{\xi'(r)}\right)'\\
&>
-c\epsilon\left(1+(1+\epsilon)^2\frac{c^2}{(n-1)^2}\left(\frac{\xi(r)}{\xi'(r)}\right)^2\right)\\
&= F\big(r,\eta(r)\big)
\end{align*}
for all sufficiently large $r$, or equivalently that
\[
-(n-1)^2-(1+\epsilon)^2 c^2\left(\frac{\xi(r)}{\xi'(r)}\right)^2
<
\left(\frac{1}{\epsilon}+1\right)(n-1)\left(\frac{\xi(r)}{\xi'(r)}\right)'
\]
that holds true for all sufficiently large $r$ 
again by the assumption (i). 
Therefore we can again 
conclude that for every given $\epsilon>0$ and $r_0>R$ there exists $r_1>r_0$ such that
\[
\varphi(r)<(1+\epsilon)\frac{c}{n-1}\frac{\xi(r)}{\xi'(r)}
\]
for all $r>r_1$.

We set
\[
\varphi(r)=\frac{c}{n-1}\frac{\xi(r)}{\xi'(r)}+\psi(r)
\]
and claim that
\[
\lim_{r\to+\infty}\psi(r)=0.
\]
If this were not the case, then 
\[
\liminf_{r\to+\infty}\psi(r)<0
\]
or 
\[
\limsup_{r\to+\infty}\psi(r)>0.
\]
Note that 
\[
\psi'(r)=-(n-1)\psi(r)\frac{\xi'(r)}{\xi(r)}\left(1+
\left(\frac{c}{n-1}\frac{\xi(r)}{\xi'(r)}+\psi(r)\right)^2\right)
-\frac{c}{n-1}\left(\frac{\xi(r)}{\xi'(r)}\right)'
\]
and that
\[
|\psi(r)|\le\frac{c\epsilon}{n-1}\frac{\xi(r)}{\xi'(r)}
\]
for $r>r_1$.

Suppose first that 
\[
\liminf_{r\to+\infty}\psi(r)<0.
\]
Given any $\delta>0$, with
\[
\liminf_{r\to+\infty}\psi(r)<-\delta,
\]
there are arbitrary large $r_\ast$ 
such that 
\[
\psi(r_\ast) < -\delta.
\]
For every such $r_\ast\ge r_2$ we have, by the assumption (i), that
\begin{align*}
\psi'(r_\ast)& > 
\delta\left(\frac{(n-1)\xi'(r)}{\xi(r)} 
+ \frac{(1-\epsilon)^2 c^2}{n-1}\frac{\xi(r)}{\xi'(r)}\right)-\frac{c}{n-1}\left( \frac{\xi(r)}{\xi'(r)} \right)'\\
&\ge \tilde{\delta}>0 
\end{align*}  
whenever $r_2$ is large enough. 
Hence there exists an interval
$[r_\ast, \tilde{r}],\ \tilde{r}>r_\ast$, such that 
$\psi(r)\le -\delta,\ \psi'(r)\ge \tilde{\delta}$, and hence 
$\psi(r)\ge (r-r_\ast)\tilde{\delta} +\psi(r_\ast)$ for all 
$r\in [r_\ast,\tilde{r}]$. Let $r_3$ be the supremum of such 
$\tilde{r}$. Then $\psi(r_3)=-\delta,\ \psi'(r_3)\ge \tilde{\delta}$, and consequently $\psi>-\delta$ on some open interval 
$(r_3,r)$. Since 
 \[
\liminf_{r\to+\infty}\psi(r)<-\delta,
\]
the supremum
\[
r_4:=\sup\{r>r_3\colon \psi(t)>-\delta\ \forall t\in (r_3,r)\}
\]
is finite, hence $\psi(r_4)=-\delta$ and $\psi'(r_4)\ge\tilde{\delta}$. This leads to a contradiction since 
$\psi(t)>-\delta$ for all $t\in (r_3,r_4)$.

Suppose then that
\[
\limsup_{r\to+\infty}\psi(r)>0. 
\]
Given any $\delta>0$, with
\[
\delta<\limsup_{r\to+\infty}\psi(r),
\]
there are arbitrary large $r_\ast$ 
such that 
\[
\psi(r_\ast) > \delta.
\]
Again, for every such $r_\ast\ge r_2$ we have
\[
\psi'(r_\ast)<-\tilde{\delta}
\]
whenever $r_2$ is large enough. As above, this leads to a contradiction. 

Thus
\[
\lim_{r\to+\infty}\psi(r)=0.
\]

Next we study the rate of the convergence $\psi \to 0$.
For this, we denote
	\begin{equation}\label{lambda-def}
	\lambda(r) = h(r) \psi(r)
	\end{equation}
and show that $\lambda(r)\to 0$ as $r\to \infty$. 
Since
\[
\lambda'(r)=h(r)\psi'(r)
+ h'(r)\psi(r)
\]
and
	\[
	\psi'(r) = -(n-1) \psi(r) \frac{\xi'(r)}{\xi(r)} \left( 1 + \Big(\frac{c}{n-1} \frac{\xi(r)}{\xi'(r)} + \psi(r) \Big)^2 \right) - \frac{c}{n-1} \left( \frac{\xi(r)}{\xi'(r)} \right)'
	\]
we have
	\begin{align*}
	\lambda'(r)
	&= - (n-1)\lambda(r) \frac{\xi'(r)}{\xi(r)} \left( 1 + \Big(\frac{c}{n-1} \frac{\xi(r)}{\xi'(r)} + \psi(r) \Big)^2 \right) \\
		&\quad - \frac{c\cdot h(r)}{n-1}\left( \frac{\xi(r)}{\xi'(r)} \right)' + 
		\frac{h'(r)\lambda(r)}{h(r)}\\
&=-\lambda(r)\left(\frac{(n-1)\xi'(r)}{\xi(r)} - \frac{h'(r)}{h(r)} + \frac{(n-1)\xi'(r)}{\xi(r)}
\left(\frac{c}{n-1}\frac{\xi(r)}{\xi'(r)} +\psi(r)\right)^2\right)\\
&\quad - \frac{c\cdot h(r)}{n-1}\left( \frac{\xi(r)}{\xi'(r)} \right)' .		
	\end{align*}
Assuming that $\lambda(r) \le -\delta <0$ for some arbitrarily large $r$ implies that 
\begin{align*}
\lambda'(r) &\ge 
\delta\left(\frac{(n-1)\xi'(r)}{\xi(r)} 
- \frac{h'(r)}{h(r)} + \frac{(1-\epsilon)^2 c^2}{n-1}\frac{\xi(r)}{\xi'(r)}\right)-\frac{c\cdot h(r)}{n-1}\left( \frac{\xi(r)}{\xi'(r)} \right)'\\
&\ge \tilde{\delta}>0 
\end{align*}
for such $r$ by assumptions (ii) and (iii).
By a similar argument as before this implies that 
\[
\liminf_{r\to +\infty}\lambda(r)\ge 0.
\]
On the other hand, if $\lambda(r)\ge\delta>0$ for some arbitrarily large $r$, then $\lambda'(r)\le-\tilde{\delta}<0$
for such $r$, and again we can conclude that 
\[
\limsup_{r\to +\infty}\lambda(r)\le 0.
\]
We have proven that $\lambda(r)\to 0$ as $r\to+\infty$, and therefore
\[
\varphi(r)=\frac{c}{n-1}\frac{\xi(r)}{\xi'(r)}+
o\left(\frac{1}{h(r)}\right)	
\]
as $r\to+\infty$.
\end{proof}

\begin{exa}
\begin{enumerate}
\item[(a)]
If $\xi(r)=r$, we may choose $h(r)=r/\log r$.
\item[(b)]
If $\xi(r)=\sinh r$, we may choose, for example, 
\[
h(r)=e^{r^\alpha},\quad 0<\alpha<1.
\]
In fact, also $h(r)=\sinh r$ will do as can be seen in the estimation of $\lambda'(r)$.
\item[(c)]
 If $\xi(r)=\sinh(\sinh r)$, we can choose 
 $h(r)=e^{r}/\log r$. 
\end{enumerate}
\end{exa}

Next, let us discuss the condition (i) of Proposition \ref{prop8}. Before doing so, we need the following lemma  which provides criteria to compare the sectional curvature $K(P_x)$ with $(\xi^\prime/\xi)\big(r(x)\big)$ when $r(x)$ is very large. To simplify notation, we set $-a^2=K$ and $f_a=\xi$.

\begin{lem}\label{lem2.6}
Suppose that $a: [0,\infty ) \rightarrow [0,\infty )$ is a smooth function such that $a(t)>0$ for every $t\geq t_0$, for some $t_0>0$.
\begin{enumerate}
\item[(i)] If
 \begin{equation}
\label{G1}
\lim_{t\rightarrow \infty} \dfrac{a' (t)}{a^2 (t)}=0,
\end{equation}
 then $$\lim_{t\rightarrow \infty}\dfrac{f_a^\prime (t)}{a(t)f_a (t)}=1.$$
\item[(ii)] If \eqref{G1} does not hold, we assume that
$ a^\prime (t) \leq -c\,a^2(t)< 0,$ with some constant $c>0$, for all sufficiently large every $t$ and, furthermore, either
\begin{enumerate}
\item[(a)] $a\not\in L^1\big([0,\infty)\big)$ or
\item[(b)] $a\in L^1\big([0,\infty)\big)$ and $f_a^\prime(t)\to \infty$ as $t\to\infty$.
\end{enumerate}
 Then we have
\begin{equation}\label{propcomp}
\liminf_{t\rightarrow \infty}\dfrac{f_a^\prime (t)}{a(t)f_a (t)}>1.
\end{equation}
\end{enumerate}
\end{lem}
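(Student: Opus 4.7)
The plan is to study the ratio
\[
v(t) := \frac{f_a'(t)}{a(t)\,f_a(t)},
\]
which is exactly the quantity whose limiting behavior is claimed. Since $f_a(0)=0$, $f_a'(0)=1$, and $f_a'' = a^2 f_a \ge 0$, we have $f_a' \ge 1$ and $f_a(t) \ge t$ on $[0,\infty)$; together with $a(t)>0$ for $t\ge t_0$ this makes $v$ a well-defined positive function on $(t_0,\infty)$. A direct computation from the Jacobi equation yields the first-order ODE
\begin{equation*}
v'(t) \;=\; a(t)\bigl[\,1 - v(t)^2 - \epsilon(t)\,v(t)\,\bigr], \qquad \epsilon(t) := \frac{a'(t)}{a(t)^2},
\end{equation*}
which is the main tool throughout.

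For part~(i), I first observe that $\epsilon\to 0$ forces $a\notin L^1([0,\infty))$: since $(1/a)' = -\epsilon\to 0$, the function $1/a$ grows sublinearly, so $a(t)\gtrsim 1/t$ for large $t$ and $\int a = \infty$. Fix $\delta\in(0,1)$. For $t$ large enough, $F(v,\epsilon(t)) := 1 - v^2 - \epsilon(t) v$ is positive at $v=1-\delta$, negative at $v=1+\delta$, and satisfies $F\le -\tfrac52$ for $v\ge 2$. Integrating $v'\le -\tfrac52 a$ on the region $v\ge 2$ and using $\int a=\infty$ forces $v\le 2$ eventually. The sign conditions at $v=1\pm\delta$ then act as one-sided barriers: any downward crossing of $1+\delta$ must occur (by integrating $v'\le -c_\delta a$ on $[1+\delta,2]$), but no upward recrossing is possible since $v'<0$ there; symmetrically at $1-\delta$. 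Hence $v$ is eventually trapped in $(1-\delta,1+\delta)$, and letting $\delta\to 0$ gives $v(t)\to 1$.

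For part~(ii), the assumption $a'\le -c\,a^2$ yields $-\epsilon\ge c$, whence
\begin{equation*}
\frac{v'(t)}{a(t)} \;\ge\; \phi(v(t)), \qquad \phi(v) := 1 - v^2 + c v,
\end{equation*}
whose positive root is $v_0 := (c+\sqrt{c^2+4})/2 > 1$; moreover $\phi\ge\eta_\delta>0$ on $[0,v_0-\delta]$ for each $\delta\in(0,v_0-1)$. In case~(a), if $v\le v_0-\delta$ on $[T,\infty)$ then $v(t)\ge v(T)+\eta_\delta\int_T^t a\to\infty$, contradiction; once $v$ has exceeded $v_0-\delta$ it cannot descend back through it (at a downward crossing $v'\le 0$ contradicts $v'\ge a\eta_\delta>0$), hence $v>v_0-\delta$ eventually. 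In case~(b), $a\in L^1$ invalidates the integration argument, so I pass to the arc-length variable $\tau := \int_0^t a(s)\,ds \in [0,\tau_\infty)$ with $\tau_\infty<\infty$; in this variable $v=(\log f_a)_\tau$. Since the hypothesis $f_a'\to\infty$ gives $f_a\to\infty$, the integral $\int_{\tau_0}^{\tau_\infty} v\,d\tau = \log f_a(\tau_\infty) - \log f_a(\tau_0)$ diverges on the bounded interval $[\tau_0,\tau_\infty)$, forcing $v$ to be unbounded. Then $v>v_0-\delta$ past every $T$, and the same crossing argument gives $v>v_0-\delta$ eventually. In both sub-cases, letting $\delta\to 0$ yields $\liminf v\ge v_0>1$.

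The main obstacle is case~(b): $a\in L^1$ breaks the direct integration step that forces $v$ into the useful region. The resolution is to change variables to $\tau=\int a$ — compactifying the time axis — and exploit the identity $v=(\log f_a)_\tau$ together with $f_a\to\infty$ to obtain unboundedness of $v$, thereby recovering the hook needed by the crossing argument.
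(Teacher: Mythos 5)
Your proof is correct, and for part (ii) it follows a genuinely different route from the paper. The paper proves (i) by citation to \cite[Lemma 2.3]{HoVa} and proves (ii) by a Wronskian--Sturm comparison: it introduces $g_k(t)=\exp\bigl(k\int_0^t a\bigr)$, checks that $g_k''/g_k\le a^2$ precisely when $k\le 1+\varepsilon:=\sqrt{1+(c/2)^2}+c/2$, deduces $\bigl(f_ag_{1+\varepsilon}'-g_{1+\varepsilon}f_a'\bigr)'\le 0$, and then uses either $g_{1+\varepsilon}\to\infty$ (case (a)) or $f_a'\to\infty$ (case (b)) to absorb the integration constant. You instead work directly with the Riccati equation $v'=a(1-v^2-\epsilon v)$ for $v=f_a'/(af_a)$ and run a barrier/trapping argument in the phase line; reassuringly, your threshold $v_0=(c+\sqrt{c^2+4})/2$ coincides with the paper's $1+\varepsilon$. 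What your approach buys: a single self-contained mechanism that also proves part (i) rather than citing it, and a cleaner treatment of case (b) via the change of variable $\tau=\int a$ and the identity $v=(\log f_a)_\tau$ --- in fact, since $f_a'\ge1$ forces $f_a(t)\ge t\to\infty$ unconditionally, your case (b) argument never actually uses the hypothesis $f_a'\to\infty$, so you prove a marginally stronger statement (consistent with the fact that $a$ decreasing and integrable already forces $ta(t)\to0$). The only blemishes are cosmetic: the constant $-\tfrac52$ in the region $v\ge2$ requires $|\epsilon|\le\tfrac14$ rather than just ``$t$ large'' in an unquantified sense, and in part (i) one should note explicitly that the barrier inequalities at $1\pm\delta$ only hold once $|\epsilon(t)|$ is small relative to $\delta$; both are trivially repaired.
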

\begin{proof}
The proof of (i) can be found in \cite[Lemma 2.3]{HoVa}. Let us prove next (ii). 
Its proof follows closely \cite[Lemma 2.3]{HoVa}.
For $k>0$ we define  
\[
g_k(t)=\exp\left(k\int_0^t a(s) ds\right)
\]
and notice that
\[
g_k^\prime (t)=k\,a(t)g_k(t)
\]
and
\[
g_k''(t)=\big(k\,a^\prime(t)+k^2a^2(t)\big)g_k(t).
\]
Since $a^\prime (t) \leq -c\,a^2(t)< 0$ for 
all $t\ge t_1$, we obtain
\begin{align*}
\frac{g_k''(t)}{g_k(t)}&=k\,a^\prime (t)+k^2a^2(t)\\
&\le \big(k^2 -c\, k\big)a^2(t)\\
&=a^2(t)
\end{align*}
if 
\[
k=\sqrt{1+(c/2)^2}+c/2=: 1+\varepsilon >1.
\]
Therefore
\begin{align*}
\big(f_a g_{1+\varepsilon}^\prime - 
g_{1+\varepsilon}f_a^\prime\big)^\prime(t) &=
f_a(t)g_{1+\varepsilon}''(t)-g_{1+\varepsilon}(t)f_a''(t)\\
& =f_a(t)\big(g_{1+\varepsilon}''(t)-g_{1+\varepsilon}(t)a^2(t)\big)\le 0
\end{align*}
for all $t\ge t_1$. Hence
\[
f_a(t)g_{1+\varepsilon}^\prime (t)\le C+
f_a^\prime (t)g_{1+\varepsilon}(t),
\]
and consequently
\[
\dfrac{g_{1+\varepsilon}^\prime (t)/g_{1+\varepsilon}(t)}{f_a^\prime (t)/f_a(t)}\le 1+\frac{C}{f_a^\prime (t) g_{1+\varepsilon}(t)} 
\]
for all $t\ge t_1$. 
If $a\not\in L^1\big([0,\infty)\big)$, we have $g_{1+\varepsilon}(t)\to\infty$ as $t\to\infty$. Otherwise
$f_a^\prime (t)\to\infty$ as $t\to\infty$ by the assumption (b) of (ii). Thus in both cases we have
\[
\limsup_{t\to\infty}\frac{a(t)}{f_a^\prime (t)/f_a(t)}
=\frac{1}{1+\varepsilon}\limsup_{t\to\infty}
\dfrac{g_{1+\varepsilon}^\prime (t)/g_{1+\varepsilon}(t)}{f_a^\prime (t)/f_a(t)}\le \frac{1}{1+\varepsilon}.
\]
This proves \eqref{propcomp}. 
\end{proof}

Notice that 
\begin{equation}
\label{dere1}
\left(\frac{f_a}{f_a^\prime}\right)^\prime = 1 - \left(\frac{f_a}{f_a^\prime}\right)^2 a^2.
\end{equation}
 So if 
\[
\lim_{t\rightarrow \infty} \frac{a' (t)}{a^2 (t)}=0,
\] 
then we deduce from Lemma \ref{lem2.6} that
$$\left(\frac{f_a}{f_a^\prime}\right)^\prime (t)=o(1),$$
as $t\to\infty$. Furthermore, using once more Lemma \ref{lem2.6}, we get 
\[ 
\lim_{t\rightarrow \infty}\frac{f_a^\prime}{f_a}(t)=\infty
\]
if $\lim_{t\rightarrow \infty}a(t) =+\infty$, 
and 
\[
\lim_{t\rightarrow \infty}\frac{f_a}{f_a^\prime}(t)=\infty
\]
if $\lim_{t\rightarrow \infty}a(t) =0$.
Therefore, if  
\[
\lim_{t\rightarrow \infty} \frac{a' (t)}{a^2 (t)}=0,
\]
 then the condition (i) of Proposition \ref{prop8} is satisfied.

Next, let us assume that $a$ satisfies the assumptions (ii) in Lemma~\ref{lem2.6}. First, integrating the inequality $a^\prime (t)\leq - c a^2 (t)$, we deduce that there exists a constant $C$ such that $a(t) \leq C/t=\tilde{a}(t)$. Therefore, using Example 2.1 of \cite{HoVa}, there exist two positive constants $\alpha>1$ and $c$ such that $\lim_{t\rightarrow \infty} \dfrac{f_{\tilde{a}} (t)}{t^\alpha}=c$ and $\lim_{t\rightarrow \infty} \dfrac{f^\prime_{\tilde{a}} (t)}{t^{\alpha-1}}=\alpha c$. This implies that
 $$\lim_{t\to \infty}\frac{f_{\tilde{a}}}{f_{\tilde{a}}^\prime} (t) =\infty.$$
By standard comparison theorem (see for instance Lemma 2.2 of \cite{HoVa}), we deduce that 
 \begin{equation}
\label{stcompe1}
\lim_{t\to \infty}\frac{f_{a}}{f_{a}^\prime} (t) =\infty.
\end{equation}
Since, by Lemma \ref{lem2.6} and \eqref{dere1}, $\left(\frac{f_a}{f_a^\prime}\right)^\prime (t) \geq C>0 $, we conclude that $(i)$ of Proposition \ref{prop8} holds if $a$ satisfies the assumptions (ii) in Lemma~\ref{lem2.6}.

Concerning conditions (ii) and (iii) of Proposition \ref{prop8}, we see that if \\
$\lim_{t\to \infty}a(t)=\infty$ then we can take $h= 
f_a^\prime /f_a$. Indeed, (ii) is a direct consequence of the fact that $(f_a /f_a^\prime )^\prime (t)\rightarrow 0$ as $t\rightarrow \infty$. Concerning (iii), we notice that 
\[
h^\prime = a^2 - \left(\frac{f_a^\prime}{f_a}\right)^2 = 
\left(\frac{f_a^\prime}{f_a}\right)^2  
\left( \left(\frac{af_a}{f_a^\prime}\right)^2 -1 \right). 
\] 
Using that 
\[
\lim_{t\rightarrow \infty}\frac{f_a^\prime (t)}{a(t)f_a (t)}=1,
\]
 we deduce that $h^\prime =o(h^2)$ which is equivalent to (iii). We do not comment on these conditions in the case $\lim_{t\to \infty}a(t)=0$ since in any case, we will need a refined asymptotic expansion.

\begin{prop}\label{prop8x}
Let $N$ be a complete rotationally symmetric Riemannian manifold whose radial sectional curvatures satisfy
	\[
	K(P_x) = -\frac{\xi''(r(x))}{\xi(r(x))} \le 0.
	\]
Suppose, furthermore, that
	\[
	\left( \frac{\xi}{\xi'} \right)'(r)
	= O(1), \quad \left( \frac{\xi}{\xi'} \right)''(r)
	=o\left( \left( \frac{\xi}{\xi'} \right)'(r) \left( \frac{\xi}{\xi'} \right)(r) \right),
	\]
and
\[
\frac{\xi(r)}{\xi'(r)}\to \infty
\]
as $r\to\infty$.
	Then the rotationally symmetric translating solitons $M_\varepsilon$, $\varepsilon\in [0,+\infty)$, are described, outside a cylinder over a geodesic ball $B_R(o)\subset N$, as graphs or bi-graphs of functions with the following asymptotic behavior
	\begin{equation}\label{comp-behav2}
	u'(r) = \frac{c}{n-1} \frac{\xi(r)}{\xi'(r)}
+\lambda(r)\frac{\xi'(r)}{\xi(r)}+ \eta(r)\left(\frac{\xi'(r)}{\xi(r)}\right)^3
\end{equation}	
		as $r\to +\infty$, where
		\begin{equation}\label{limlam}
\left|\lambda+\frac{1}{c}\left(\frac{\xi}{\xi'}\right)'\right|\to 0
\end{equation}
and
\begin{equation}\label{limeta}
\left| \eta +\frac{3(n-1)}{c^3}\left(\left(\frac{\xi}{\xi'}\right)'\right)^2 - \frac{(n-1)^2}{c^3}\left(\frac{\xi}{\xi'}\right)'-\frac{n-1}{c^3}\left(\frac{\xi}{\xi'}\right)\left(\frac{\xi}{\xi'}\right)''\right| \to 0
\end{equation}
as $r\to+\infty$.
\end{prop}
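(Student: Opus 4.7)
The plan is to proceed in parallel with Proposition~\ref{prop8} but push the asymptotic expansion one order further. Set $\varphi = u'$ and abbreviate $L = \xi/\xi'$, so that \eqref{radeq} reads $\varphi' = (1+\varphi^2)\bigl(c - (n-1)\varphi/L\bigr)$. Writing $\psi := \varphi - \tfrac{c}{n-1}L$, the factor $c - (n-1)\varphi/L$ collapses to $-(n-1)\psi/L$, and expanding $1+\varphi^2 = 1 + \psi^2 + \tfrac{2c\psi L}{n-1} + \tfrac{c^2 L^2}{(n-1)^2}$ yields the ODE
\begin{equation*}
\psi' + \frac{cL'}{n-1} + \frac{(n-1)\psi}{L} + \frac{c^2 L\psi}{n-1} + 2c\psi^2 + \frac{(n-1)\psi^3}{L} = 0.
\end{equation*}
Since $L\to\infty$ and $L'=O(1)$, the conditions of Proposition~\ref{prop8} are met (e.g.\ with $h=L^{1-\varepsilon}$), so as a starting point we already have $\psi\to 0$.

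The first refinement identifies the leading behavior of $\psi$. In the ODE above the linear coefficient $-c^2L/(n-1)$ is large and negative, whereas the forcing $-cL'/(n-1)$ is merely $O(1)$; the algebraic balance gives $\psi\approx -L'/(cL)$. Define $\lambda(r):=-L'(r)/c$, so that \eqref{limlam} holds trivially; then $L\psi + L'/c \to 0$ is obtained rigorously by rerunning the $\liminf/\limsup$ barrier argument from Proposition~\ref{prop8}: if $L\psi + L'/c$ were to exceed $+\delta$ at arbitrarily large $r$, the ODE would force its derivative to be uniformly bounded below there, leading to a contradiction, and the lower bound is symmetric.

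For the second refinement, set $A:=\psi + L'/(cL)$ and $\eta := L^3 A$. Substituting $\psi = -L'/(cL) + A$ into the $\psi$-equation and collecting terms produces
\begin{equation*}
A' + \frac{c^2 L}{n-1}\,A = -S(r) + R(r,A),\qquad S = -\frac{L''}{cL} + \frac{3(L')^2}{cL^2} - \frac{(n-1)L'}{cL^2},
\end{equation*}
where $R(r,A)$ collects the quadratic and higher contributions in $A$ together with explicit $O(L^{-4})$ remainders. The algebraic balance $A\approx -(n-1)S/(c^2L)$ yields
\begin{equation*}
\eta = L^3 A = \frac{(n-1)LL''}{c^3} - \frac{3(n-1)(L')^2}{c^3} + \frac{(n-1)^2 L'}{c^3} + o(1),
\end{equation*}
which is exactly \eqref{limeta}. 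A third application of the same barrier/comparison technique, now to $\eta$ minus this explicit polynomial in $L'$ and $L''$, makes the convergence rigorous.

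The delicate point is the bookkeeping needed at each stage of the barrier step: one must check that the driving linear term $c^2LA/(n-1)$ strictly dominates $R(r,A)$ and that the forcing $S$ grows strictly slower than this linear coefficient. This is exactly where the hypothesis $(\xi/\xi')''=o\bigl((\xi/\xi')'(\xi/\xi')\bigr)$ enters: it ensures $L''=o(L'L)$, so in particular $S=O(1/L)=o(L)$. The remaining nonlinear contributions are $O(A^2)+O(A/L)$ and are dwarfed by $c^2LA/(n-1)$ once the rough bound $A=o(1/L)$ from the first refinement is in hand; the comparison argument then closes.
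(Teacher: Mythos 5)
Your proposal is correct and follows essentially the same route as the paper: the same decomposition $\psi=\varphi-\tfrac{c}{n-1}\xi/\xi'$, the same first-order correction $-\tfrac{1}{c}(\xi/\xi')'\,\xi'/\xi$, the same quantity $\eta=(\xi/\xi')^3\bigl(\psi+\tfrac{1}{c}(\xi/\xi')'\xi'/\xi\bigr)$, and the same $\liminf/\limsup$ barrier argument inherited from Proposition~\ref{prop8} at each stage. Your explicit identification of where $(\xi/\xi')''=o\bigl((\xi/\xi')'(\xi/\xi')\bigr)$ is used, and your choice to take $\lambda:=-\tfrac{1}{c}(\xi/\xi')'$ exactly (making \eqref{comp-behav2} an identity and \eqref{limlam} immediate), are if anything slightly cleaner bookkeeping than the paper's, but the mathematics is the same.
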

\begin{proof}
By the proof of \ref{prop8}
\[
u'(r)=\varphi(r)= \frac{c}{n-1} \frac{\xi(r)}{\xi'(r)}
+\psi(r),
\]
where $\psi(r)\to 0$ as $r\to\infty$. We write
\[
\lambda(r)=\frac{\xi(r)}{\xi'(r)}\psi(r)
\]
and claim that \eqref{limlam} holds for $\lambda$.
First we compute
\begin{align*}
\lambda'&=\frac{\xi}{\xi'}\psi' + 
\left(\frac{\xi}{\xi'}\right)'\psi\\
&=-(n-1)\psi\left(1+
\left(\frac{c}{n-1}\frac{\xi}{\xi'}+\psi\right)^2\right)
-\frac{c}{n-1}\frac{\xi}{\xi'}\left(\frac{\xi}{\xi'}\right)' +\left(\frac{\xi}{\xi'}\right)'\psi\\
&=\frac{c}{n-1}\frac{\xi}{\xi'}\left(-\left(\frac{\xi}{\xi'}\right)' - c\lambda - 2(n-1)\psi^2 
+\frac{n-1}{c}\frac{\xi'}{\xi}\left(\frac{\xi}{\xi'}\right)'\psi \right)\\
&\qquad -(n-1)\psi\big(1+\psi^2\big).
\end{align*}
If, for fixed $\delta>0$
\[
\lambda(r)>-\frac{1}{c}\left(\frac{\xi}{\xi'}\right)'(r)+\delta
\]
for some arbitrary large $r$, then $\lambda'(r)<- \tilde{\delta}<0$ for such $r$. Similarly, if 
\[
\lambda(r)<-\frac{1}{c}\left(\frac{\xi}{\xi'}\right)'(r)-\delta
\]
for some arbitrary large $r$, then 
$\lambda'(r)>\tilde{\delta}$ for such $r$. Hence \eqref{limlam} holds.

Next we write 
\[
\lambda=-\frac{1}{c}\left(\frac{\xi}{\xi'}\right)'
+\eta\left(\frac{\xi'}{\xi}\right)^2.
\]
Hence 
\begin{align*}
\eta'&=\frac{c^2}{n-1}\frac{\xi}{\xi'}\biggl\lbrace -\eta + \frac{2(n-1)}{c^2}\left(\lambda+\frac{1}{c}\left(\frac{\xi}{\xi'}\right)'\right)\left(\frac{\xi}{\xi'}\right)' - \frac{2(n-1)}{c}\lambda^2 \\
&\quad +\frac{n-1}{c^2}\lambda\left(\frac{\xi}{\xi'}\right)' 
 -\frac{(n-1)^2}{c^2}\lambda(1+\psi^2) + \frac{n-1}{c^3}\left(\frac{\xi}{\xi'}\right)\left(\frac{\xi}{\xi'}\right)''\biggr\rbrace
\end{align*}
from which we deduce as earlier that \eqref{limeta} holds.
\end{proof}

We claim that the assumptions of the previous lemma hold under the hypothesis of Lemma \ref{lem2.6} (ii) with $K=-a^2$. By Lemma \ref{lem2.6}, \eqref{dere1} and \eqref{stcompe1}, we see that 
\[
 \left( \frac{f_a}{f_a^\prime} \right)'(r) \geq C
 \] 
 and  
\[
 \frac{f_a}{f_a'}(r) \rightarrow \infty
 \] 
 as $r\to \infty$. Notice that
$$\left( \frac{f_a}{f_a'} \right)''=-2 \dfrac{f_a}{f_a '} a \left(\left(\dfrac{f_a}{f_a '}\right)' a + \dfrac{f_a}{f_a^\prime}a^\prime\right).$$
So the condition 
$$\left( \frac{f_a}{f_a'} \right)''(r)=o\left( \left( \frac{f_a}{f_a'} \right)'(r) \left( \frac{f_a}{f_a'} \right)(r) \right)$$
is equivalent to
$$\dfrac{f_a}{f_a^\prime} a a^\prime =o(1).$$
Since $ af_a/f_a^\prime \leq 1$ and $a^\prime$ goes to $0$, this holds true. This proves the claim.

\begin{lem}\label{lem-sub-sup-diff}
Let $N$ be a Cartan-Hadamard manifold whose radial sectional curvatures satisfy
	\[
	-b(r)^2 \le K_N \le -a(r)^2 \le -\alpha^2,
	\]
where $\alpha>0$ is constant and $b\ge a$ are positive smooth functions such that the corresponding solutions to the Jacobi equation \eqref{eq-jacobi} satisfy
	\begin{equation}\label{asymptotic-as}
	\frac{f_a}{f_a'}(r) = \frac{f_b}{f_b'}(r) + 
	O\left(\frac{1}{h(r)}\right)
	\end{equation}
	and that
	\begin{equation}\label{extra-curv-as}
	h(r) \left( \frac{f_i}{f_i'}\right)'(r)\to 0 , \quad i=a,b,
	\end{equation}
for some smooth, increasing, positive function $h$ such that 
\begin{equation}\label{intva}
\frac{1}{h}\in L^1 (+\infty)
\end{equation} 
and
\begin{equation}\label{h'h}
\frac{h'}{h}(r)\to 0
\end{equation}
as $r\to\infty$.	
Then there exist entire radial sub- and supersolutions, $u_a$ and $u_b$, of \eqref{eq-soliton} such that the difference $|u_b(x) - u_a(x)|$ remains bounded as $r(x) \to \infty$.
\end{lem}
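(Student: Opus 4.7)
The plan is to take $u_a$ and $u_b$ to be precisely the radial sub- and supersolutions produced by Lemma~\ref{sub-sup-lemma} from the radial bowl soliton solutions on the model manifolds $N_a$ and $N_b$, normalized (after an additive constant, which does not affect the sub-/supersolution property) so that $u_a(o) = u_b(o)$. Since both are radial on $N$, we have $\abs{u_b(x) - u_a(x)} = \abs{u_b(r) - u_a(r)}$ with $r = r(x)$, and the task reduces to showing that the one-variable difference stays bounded as $r \to \infty$. Fixing a large $R_0$ I would write
\[
\abs{u_b(r) - u_a(r)} \le \abs{u_b(R_0) - u_a(R_0)} + \int_{R_0}^r \abs{u_b'(s) - u_a'(s)}\,ds,
\]
so it suffices to estimate $\abs{u_b' - u_a'}$ in an integrable fashion near infinity.

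The next step is to apply Proposition~\ref{prop8} to both model manifolds with the common function $h$ provided in the hypothesis, yielding
\[
u_i'(r) = \frac{c}{n-1}\frac{f_i(r)}{f_i'(r)} + o\!\left(\frac{1}{h(r)}\right), \quad i = a, b.
\]
Subtracting the two expansions and invoking the asymptotic assumption \eqref{asymptotic-as} immediately gives $u_a'(r) - u_b'(r) = O(1/h(r))$. Plugging this into the inequality above and using $1/h \in L^1(+\infty)$ from \eqref{intva} delivers the desired uniform bound on $\abs{u_b - u_a}$.

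The delicate point is to check that the three hypotheses of Proposition~\ref{prop8} actually hold for both $f_a$ and $f_b$ under our assumptions. Condition (ii) is \eqref{extra-curv-as} verbatim. For condition (iii), the lower curvature bound $K_N \le -\alpha^2 < 0$ forces $f_i'/f_i \ge \alpha$ for all large $r$, so $\max(f_i/f_i', f_i'/f_i) \ge \alpha$ is bounded below by a positive constant, and then \eqref{h'h} yields $h'/h = o(\max(\ldots))$. For condition (i), the same lower bound combined with the elementary estimate $\max(f_i/f_i', f_i'/f_i) \ge 1$ and $\max(1,(f_i/f_i')^2) \ge 1$ reduces the requirement to $(f_i/f_i')'(r) \to 0$; since \eqref{intva} forces $h(r) \to \infty$, this in turn follows from \eqref{extra-curv-as}. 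I expect this verification to be the main obstacle, precisely because the role of the uniform lower curvature bound $-\alpha^2$ is exactly to keep the min-max expression in condition (i) bounded away from zero and thereby make Proposition~\ref{prop8} applicable in our present setting.
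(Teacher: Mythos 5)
Your proposal is correct and follows essentially the same route as the paper: normalize $u_a(o)=u_b(o)$, write $u_b-u_a$ as the integral of $u_b'-u_a'$, apply Proposition~\ref{prop8} with the given $h$ to both model profiles, and combine \eqref{asymptotic-as} with \eqref{intva} to conclude. Your explicit verification of hypotheses (i)--(iii) of Proposition~\ref{prop8} (using that the min--max expressions are bounded below by $1$ and that \eqref{intva} forces $h\to\infty$) is in fact more detailed than the paper's one-line justification.
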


\begin{proof}
Since 
\[
\frac{f'_b}{f_b}(s)\ge \frac{f'_a}{f_a}(s)\ge\frac{1}{\alpha}\coth(\alpha s)
\]
by \cite[Lemma 2.2]{HoVa}, we see that the assumptions in Proposition~\ref{prop8} hold with a function $h$ satisfying \eqref{extra-curv-as} and \eqref{h'h}.
Let $u_a$ and $u_b$ be the entire radial sub- and, respectively, supersolution on $N$ obtained from the model manifolds $N_a$ and $N_b$ as in 
Lemma~\ref{sub-sup-lemma}. 
By considering $u_i-u_i(o),\ i=a,b$, we can assume that $u_i(o)=0$ and hence, using \eqref{comp-behav},  \eqref{asymptotic-as} and \eqref{intva},  the difference of the these functions can be estimated by
	\begin{align*}
	|u_b(x) - u_a(x)| 
&= \left| \int_0^{r(x)}\left( u_b'(s) - u_a'(s)\right) \, ds \right| \\
&= \frac{c}{n-1}  \left| \int_0^{r(x)}
\left( \frac{f_b}{f_b'}(s) - \frac{f_a}{f_a'}(s) +
O\left(\frac{1}{h(s)}\right)\right) \, ds \right| \\
	&\le C < \infty.
	\end{align*}
 \end{proof}
\begin{rem}\label{rem-assym}
The assumptions in Lemma~\ref{lem-sub-sup-diff} are very restrictive.   Indeed, it follows from 
\eqref{asymptotic-as}, \eqref{extra-curv-as}, and \eqref{intva} that
\[
 b^2 - a^2 
 = \left(\frac{f'_b}{f_b}\right)^2\left(1-\left(\frac{f_b}{f'_b}\right)'\right) -
\left(\frac{f'_a}{f_a}\right)^2
\left(1-\left(\frac{f_a}{f'_a}\right)'\right)
\to 0
\]
as $r\to\infty$.
\end{rem}
\begin{thm}\label{thm-globexist}
Let $N$ be a Cartan-Hadamard manifold as in 
Lemma~\ref{lem-sub-sup-diff}.
 Then there exists an entire solution $u \colon M \to \R$ to the soliton equation \eqref{eq-soliton}. 
\end{thm}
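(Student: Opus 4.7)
The plan is to exhaust $N$ by geodesic balls, solve the Dirichlet problem on each ball via Lemma~\ref{lem-ballexist}, and then extract a convergent subsequence using the global radial barriers of Lemma~\ref{lem-sub-sup-diff} to prevent the approximating solutions from drifting off to infinity on compact sets.

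Fix an increasing sequence $R_k\to\infty$. For each $k$, Lemma~\ref{lem-ballexist} produces a solution $w_k\in C^2(B_k)\cap C(\bar B_k)$ of \eqref{eq-soliton} on $B_k=B(o,R_k)$ with $w_k|_{\partial B_k}=0$. Let $u_a$ and $u_b$ denote the entire radial sub- and supersolutions from Lemma~\ref{sub-sup-lemma}, which under our hypotheses satisfy $|u_b(x)-u_a(x)|\le C$ for all $x\in N$ by Lemma~\ref{lem-sub-sup-diff}. Since $u_a$ and $u_b$ are radial, they attain the constant values $u_a(R_k)$ and $u_b(R_k)$ on $\partial B_k$; subtracting these constants (which preserves the equation, as \eqref{eq-soliton} is invariant under $u\mapsto u+\text{const}$) yields a sub- and a supersolution vanishing on $\partial B_k$. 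The comparison principle (already invoked implicitly in Lemma~\ref{lem-ballexist}) gives
\[
u_a(x)-u_a(R_k)\le w_k(x)\le u_b(x)-u_b(R_k),\qquad x\in B_k.
\]
Define the shift $\tilde w_k:=w_k+u_a(R_k)$, which again solves \eqref{eq-soliton} on $B_k$. Then
\[
u_a(x)\le \tilde w_k(x)\le u_b(x)+\bigl(u_a(R_k)-u_b(R_k)\bigr)\le u_b(x)+C,
\]
so $\{\tilde w_k\}$ is uniformly bounded on every compact subset of $N$.

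For any $R>0$ and every $k$ large enough that $B(o,2R)\subset B_k$, the interior gradient estimate \cite[Lemma~2.3]{CHH2} applied to the bounded family $\{\tilde w_k\}$ yields $|\nabla\tilde w_k|\le K(R)$ on $B(o,R)$, with $K(R)$ independent of $k$. Once the gradient is controlled, the coefficients in \eqref{eq-soliton} are smooth and uniformly elliptic on $B(o,R)$, so standard quasilinear elliptic regularity produces uniform $C^{2,\alpha}_{\loc}(N)$ bounds. A diagonal subsequence $\tilde w_{k_j}$ converges in $C^2_{\loc}(N)$ to a function $u\colon N\to\R$ satisfying \eqref{eq-soliton} on all of $N$.

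The real obstacle has already been overcome by Lemma~\ref{lem-sub-sup-diff}: without the uniform boundedness of $|u_b-u_a|$ one could not shift the approximating solutions to a common normalization, and no locally uniform bound (hence no compactness) would be available. Everything downstream of that estimate is a routine exhaustion-plus-diagonal argument combined with standard interior regularity for quasilinear elliptic equations.
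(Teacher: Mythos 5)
Your proposal is correct and follows essentially the same route as the paper: solve the Dirichlet problem on an exhaustion by balls via Lemma~\ref{lem-ballexist}, use the radial barriers $u_a$ and $u_b$ together with the bound $|u_b-u_a|\le C$ from Lemma~\ref{lem-sub-sup-diff} to get locally uniform bounds, and pass to a $C^2_{\loc}$ limit. The only cosmetic difference is that the paper prescribes the boundary value $m_k=u_a|\p B(o,k)$ directly rather than solving with zero data and shifting by $u_a(R_k)$ afterwards; these are equivalent since the equation is invariant under adding constants.
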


\begin{proof}
Let $u_a$ and $u_b$ be the entire radial sub- and, respectively, supersolution on $N$ obtained from the model manifolds $N_a$ and $N_b$ as in 
Lemma~\ref{sub-sup-lemma}. By Lemma~\ref{lem-ballexist} there exists, for each $k\in\N$, a function 
$u_k\in C^2\big(B(o,k)\big)\cap C\big(\bar{B}(o,k)\big)$ that solves the equation 
\[
 \begin{cases}
  \dv \dfrac{\nabla v}{\sqrt{1+|\nabla v|^2}} = \frac{c}{\sqrt{1+|\nabla v|^2}} \quad \text{in } B(o,k) \\
  v|\p B(o,k) = m_k,
 \end{cases}
\]
where $m_k=u_a|\p B(o,k)$. On the other hand, by Lemma~\ref{lem-sub-sup-diff}, there exists a constant $C>0$ such that $u_b+C\ge u_a$ on $N$. Furthermore, $u_b +C$ is a (global) supersolution. Therefore the sequence $(u_k)$ is locally uniformly bounded and hence there exists a subsequence converging locally uniformly with respect to $C^2$-norm to an entire solution $u$.
\end{proof}

As in \cite{LM} we call the graph $M=\{(x,u(x)) \in N\times\R \colon x\in N \}$ a bowl soliton.

We want to give some examples about metrics that satisfy the assumptions \eqref{asymptotic-as} -- \eqref{h'h}.

\begin{exa}
\begin{enumerate}
\item[1.] We may choose $f_a(r)=\sinh r$ and $f_b(r)=g(r)\sinh r$, where $g$ is a smooth positive function such that $g(r)=c\, e^{-1/r}$ for all large $r$ with a suitable positive constant $c$ and that 
\[
\frac{2g^\prime(r)\coth r}{g(r)}+\frac{g''(r)}{g(r)}
\]
is nonnegative and bounded and, furthermore, that $g(0)=1$ and $g^{(\text{odd})}(0)=0$.
Then 
\[
a^2(r)=\frac{f_a''(r)}{f_a(r)}=1\le 1+
\frac{2g^\prime(r)\coth r}{g(r)}+\frac{g''(r)}{g(r)}
=\frac{f_b''(r)}{f_b(r)}=b^2(r),
\]
and $f_a$ and $f_b$ satisfy the assumptions \eqref{asymptotic-as}--\eqref{h'h} with $h(r)=r^2$.
\item[2.]
Another example is given by $f_a(r)=\tfrac{1}{2}\big(\sinh r +\tfrac{1}{2}\sinh 2r\big)$ and \newline
$f_b(r)=\tfrac{1}{2}\sinh 2r$. Then
\[
a^2(r)=\frac{f_a''(r)}{f_a(r)}=\frac{\sinh r +2\sinh(2r)}{\sinh r +\tfrac{1}{2}\sinh 2r}\le 4=\frac{f_b''(r)}{f_b(r)}=b^2(r)
\]
and, furthermore, $f_a$ and $f_b$ satisfy \eqref{asymptotic-as}--\eqref{h'h} with $h(r)=e^r$.\\
In fact, any choice
\[
f_a(r)=s\,\sinh r+(1-s)\tfrac 1b \sinh(bt)\text{ and }
f_b(t)=\tfrac 1b \sinh bt,
\]
with $0<s<1$ and $b>1$, will do.
\end{enumerate}
\end{exa}

%%%%%%%%%%%%%%%%%%%%%
%%%%%%%%%%%%%%%%%%%%%

\subsection{Bounded solutions}
\label{subsec-bddsol}

Next we show that if the sectional curvatures of $N$ are negative enough near the infinity, it is possible to have entire bounded solutions of \eqref{eq-soliton}. Recall that, in the radially symmetric case, the soliton equation 
\eqref{eq-soliton} can be written as
	\begin{equation}\label{rot-sym-sol}
	\frac{u''(r)}{1+u'(r)^2} + (n-1)\frac{\xi'(r)}{\xi(r)} u'(r) - c = 0.
	\end{equation}

\begin{thm}\label{thm-bounded}
Suppose that 
\[
K(P_x)\le -a\big(r(x)\big)^2,
\]
where the curvature upper bound goes to $-\infty$ fast enough so that
\[
\lim_{t\to\infty}\frac{a^\prime (t)}{a(t)^2}=0\ 
\text{ and }\ 
\int_0^\infty\frac{f_a(t)}{f_a^\prime (t)}dt<\infty.
\]
Then there exists a translating soliton in $N\times\R$ that is the graph of an entire bounded solution $u\colon N\to\R$ 
to the equation \eqref{eq-soliton}.
\end{thm}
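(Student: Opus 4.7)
The strategy is to exhibit a bounded entire subsolution $u_a$ of \eqref{eq-soliton} on $N$, sandwich a sequence of ball solutions between $u_a$ and the constant $\sup u_a$, and extract a bounded entire solution by a compactness argument.

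First, let $u_a$ denote the radial entire solution on the model manifold $N_a$ supplied by \cite[Theorem 7]{LM}. The hypothesis $\lim_{t\to\infty} a'(t)/a(t)^2 = 0$ ensures, as observed after Lemma~\ref{lem2.6}, that hypothesis (i) of Proposition~\ref{prop8} holds. Inspecting only the upper-bound portion of the proof of Proposition~\ref{prop8} (which uses nothing beyond (i)), we obtain that for every $\varepsilon>0$ and all sufficiently large $r$,
\[
0 \le u_a'(r) \le (1+\varepsilon)\,\frac{c}{n-1}\,\frac{f_a(r)}{f_a'(r)}.
\]
Combined with the hypothesis $\int_0^\infty f_a/f_a'\,dt < \infty$, this forces $u_a' \in L^1([0,\infty))$, so $\lim_{r\to\infty} u_a(r) =: L$ exists and is finite and $u_a$ is bounded on $N_a$. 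Viewed as a radial function on $N$ via $u_a(x) = u_a(r(x))$, the Laplace comparison argument of Lemma~\ref{sub-sup-lemma} shows $u_a$ is a bounded global subsolution of \eqref{eq-soliton} on $N$ with $\sup_N u_a = L$.

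Second, set $M := L$; the constant function $M$ is a supersolution of \eqref{eq-soliton} since any constant is a supersolution whenever $c\ge 0$. For each $k \in \N$ apply Lemma~\ref{lem-ballexist} with boundary value $m_k := u_a|_{\partial B(o,k)}$ to obtain $u_k \in C^2\bigl(B(o,k)\bigr) \cap C\bigl(\overline{B(o,k)}\bigr)$ solving \eqref{eq-soliton} on $B(o,k)$ with $u_k|_{\partial B(o,k)} = m_k$. Since $u_a$ is a subsolution sharing the boundary data of $u_k$, and $M$ is a supersolution with $M \ge m_k$ on $\partial B(o,k)$, the comparison principle for \eqref{eq-soliton} yields
\[
u_a \le u_k \le M \qquad \text{on } B(o,k).
\]
Hence $(u_k)$ is uniformly bounded on every compact subset of $N$. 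The interior gradient estimate \cite[Lemma 2.3]{CHH2} together with standard elliptic Schauder theory furnishes local $C^{2,\alpha}$ bounds, and a diagonal argument extracts a subsequence converging locally in $C^2$ to an entire solution $u$ of \eqref{eq-soliton}. The sandwich $u_a \le u \le M$ ensures $u$ is bounded.

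The main technical hurdle is the first step: extracting from the two soft hypotheses on $a$ the integrable upper bound on $u_a'$; once $u_a$ is known to be bounded, the remainder is the barrier-and-compactness scheme already employed for Theorem~\ref{thm-globexist}.
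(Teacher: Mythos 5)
Your proposal is correct, and the overall scheme --- bounded radial subsolution plus constant supersolution, then exhaustion by the ball solutions of Lemma~\ref{lem-ballexist} and local $C^{2,\alpha}$ compactness --- is exactly the one the paper uses. The one place where you diverge is the construction of the bounded subsolution. The paper does not pass through Proposition~\ref{prop8} or the model solution $u_a$ at all: it writes down the explicit radial function
\[
v(x)=\frac{c_1}{n-1}\int_0^{r(x)}\frac{f_a(t)}{f_a'(t)}\,dt,
\]
bounded by the integrability hypothesis, and verifies the subsolution inequality by a direct computation, using only the identity $1-a^2(f_a/f_a')^2=(f_a/f_a')'$ together with Lemma~\ref{lem2.6}(i) to see that this quantity tends to $0$, so that a large $c_1$ wins. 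You instead take the genuine bowl solution $u_a$ of \cite[Theorem~7]{LM} on $N_a$ and deduce its boundedness from the upper-bound half of the proof of Proposition~\ref{prop8}; this is legitimate, since (as you note) that half uses only hypothesis (i), and the paper's discussion following Lemma~\ref{lem2.6} shows that $\lim a'/a^2=0$ does imply (i) for $\xi=f_a$. The trade-off: your route leans on heavier machinery already established in the paper (and on citing the \emph{proof} rather than the statement of Proposition~\ref{prop8}, since (ii)--(iii) involve an auxiliary $h$ you do not need), while the paper's explicit barrier is self-contained, elementary, and sidesteps any asymptotic analysis of the actual model solution. Both arguments are sound.
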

\begin{rem}
For instance the rotationally symmetric manifold $(M,g)$, where
\[
g= dr^2 +\sinh^2(\sinh r)d\vartheta^2,
\]
satisfies the assumptions above.
\end{rem}
\begin{proof}
We will prove that the bounded function
\[
v(x)=c_1\int_0^{r(x)}\frac{f_a(t)}{(n-1)f_a^\prime(t)}dt
\]
is a subsolution to \eqref{eq-soliton} if the constant $c_1$ is large enough. It follows from the Laplace comparison 
\[
\Delta r(x)\ge \frac{(n-1)f_a^\prime\big(r(x)\big)}{f_a\big(r(x)\big)}
\]
that $v$ is a subsolution to \eqref{eq-soliton} if
\[
\frac{v''(t)}{1+v'(t)^2} +\frac{(n-1)f_a'(t)v'(t)}{f_a(t)}-c\ge 0
\]
for $t\ge 0$, where we have denoted $v(x)=: v\big(r(x)\big)$. By a direct computation we get
\[
\frac{v''}{1+v'^2}+\frac{(n-1)f_a' v'}{f_a}-c=\frac{c_1}{n-1}\cdot\frac{\big(1-a^2(f_a/f_a')^2\big)}{1+\left(\tfrac{c_1}{n-1}\right)^2
\left(\tfrac{f_a}{f_a'}\right)^2} +c_1 -c.
\]
By Lemma~\ref{lem2.6} 
\[
1-a(t)^2\big(f_a(t)/f_a'(t)\big)^2 =\big(f_a(t)/f_a'(t)\big)^\prime\to 0
\]
as $t\to\infty$. On the other hand,  $a(t)^2\big(f_a(t)/f_a'(t)\big)^2\to 0$ as $t\to 0+$, hence we obtain that 
\[
\frac{c_1}{n-1}\cdot\frac{\big(1-a^2(f_a/f_a')^2\big)}{1+\left(\tfrac{c_1}{n-1}\right)^2
\left(\tfrac{f_a}{f_a'}\right)^2} +c_1 -c\ge 0
\]
if $c_1$ is large enough. On the other hand, any constant function is a supersolution. Thus the existence of an entire bounded solution follows as in Theorem~\ref{thm-globexist}.
\end{proof}

\subsection{Asymptotic Dirichlet problem}\label{subsec-adp}
Under certain conditions on the functions $a$ and $b$ in the curvature bounds \eqref{curv-as} it is even possible to prescribe the asymptotic behavior of an entire  \emph{bounded} solution. The soliton equation \eqref{eq-soliton} is a special case of the so-called $f$-minimal graph equation
\begin{equation} \label{fmingrapheq}
  \dv \dfrac{\nabla u}{\sqrt{1+|\nabla u|^2}} = \ang{\nb f,\nu},
\end{equation}
where $\bar{\n}f$ is the gradient of a smooth function 
$f\colon N\times\R\to\R$ with respect to the product Riemannian metric of $N\times\R$ and $\nu$ denotes the downward unit normal to the graph of $u$, i.e.
\[
     \nu = \frac{\nabla u-\partial_t}{\sqrt{1+|\nabla u|^2}},
 \]
 where $\partial_t$ denotes the standard coordinate vector field on $R$.
 Indeed, we obtain \eqref{eq-soliton} in the case $f(x,t)=-ct$.
 
 Asymptotic Dirichlet problem for \eqref{fmingrapheq} was solved in \cite{CHH2} under assumptions on $f$ and the radial curvature functions $a$ and $b$ that are not directly applicable in the setting of the current paper. In \cite{CHH2} the authors applied the assumptions and results from \cite{HoVa} to construct local barriers at the sphere at infinity $\partial_\infty N$. These barriers consist of an angular part and of a radially decaying part. The assumptions on $a$ and $b$ are needed to control effectively first and second order derivatives of the barriers. We refer to \cite{CHH2} and \cite{HoVa}  for the definition of the sphere at infinity and other relevant notions concerning the asymptotic Dirichlet problem.
 
Since we are looking for, first of all, bounded solutions, it is natural to assume that $f_a/f_a^\prime$ is integrable; see \eqref{comp-behav} and Theorem~\ref{thm-bounded}. In the light of \cite[Lemma 4.3]{CHH2} we assume that  
\begin{equation}\label{a1}
\lim_{t\to\infty}\frac{f_a(t)t^{1+\varepsilon}}{f_a^\prime(t)}=0
\end{equation}
 for some $\varepsilon>0$.
Scrutinizing the reasoning in \cite{HoVa} we see that 
the assumptions on $a$ and $b$ can be weakened to the setting of the current paper. We assume that the radial curvature functions $a$ and $b\ (\ge a)$ are increasing,
\begin{equation}\label{b1}
\lim_{t\to\infty}\frac{b^\prime(t)}{b(t)^2}=0
\end{equation}
 and that, for each $k>0$, there exist positive and finite limits
 \begin{equation}\label{b2}
 \lim_{t\to\infty}\frac{b\left(t\pm\tfrac{k}{b(t)}\right)}{b(t)}=: c_{\pm k}
 \end{equation}
 and
 \begin{equation}\label{ab1}
  \lim_{t\to\infty}\frac{f_a\left(t-\tfrac{k}{b(t)}\right)}{f_a(t)}>0.
 \end{equation}
 Furthermore, we assume that there exists a constant $\kappa>0$ such that
 \begin{equation}\label{ab2}
 \lim_{t\to\infty}\frac{t^{1+\kappa}b(t)}{f_a^\prime (t)}=0. 
 \end{equation}
The assumption \eqref{b2} will be used instead of \cite[Lemma 3.10]{HoVa} that was one of the important tools in \cite{HoVa} to obtain bounds for first and second order derivatives of the barriers. On the other hand, 
\eqref{ab1} replaces the assumption \cite[(A2)]{HoVa} in the proof of another important tool \cite[Lemma 3.15]{HoVa}.  
 Then we can construct barriers in suitable truncated cones as in \cite[Section 4.1]{CHH2} and apply \cite[Lemma 4.3 and 4.7]{CHH2} in order to solve the following asymptotic Dirichlet problem:
 \begin{thm}\label{thm-adp}
 Suppose that the radial curvature functions $a$ and $b$ satisfy the assumptions \eqref{a1}-\eqref{ab2}. Then, for every continuous $\varphi\in C(\partial_\infty N)$, there exists a unique solution $u\in C^2(N)\cap C(\overline{N})$ to the asymptotic Dirichlet problem
 \begin{equation}\label{dprobinN}
 \begin{cases}
  \dv \dfrac{\nabla u}{\sqrt{1+|\nabla u|^2}} = \frac{c}{\sqrt{1+|\nabla u|^2}} \quad \text{in } N \\
  u|\partial_\infty N = \varphi.
 \end{cases}
\end{equation}
 \end{thm}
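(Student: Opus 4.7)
The plan is to view \eqref{eq-soliton} as the $f$-minimal graph equation \eqref{fmingrapheq} with $f(x,t)=-ct$, so that $\langle\bar\nabla f,\nu\rangle=c/\sqrt{1+|\nabla u|^2}$, and then to follow the exhaustion-plus-barriers framework of \cite{CHH2}. First, extend $\varphi$ to a function $\tilde\varphi\in C(\overline{N})$ and, for each $k\in\N$, invoke Lemma~\ref{lem-ballexist} (plus a straightforward adaptation allowing continuous rather than constant boundary data, via the Leray--Schauder method together with the radial sub/supersolutions $u_a,u_b$ of Lemma~\ref{sub-sup-lemma} shifted to sandwich $\tilde\varphi|\partial B(o,k)$) to obtain $u_k\in C^{2,\alpha}(B(o,k))\cap C(\overline{B(o,k)})$ solving \eqref{eq-soliton} with $u_k|\partial B(o,k)=\tilde\varphi|\partial B(o,k)$. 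The global sub/supersolutions give a uniform height bound $\sup_k\|u_k\|_\infty<\infty$, and the interior gradient estimate of \cite[Lemma 2.3]{CHH2} produces local $C^1$ bounds; Schauder theory then yields local $C^{2,\alpha}$ bounds. A diagonal extraction produces a subsequence $u_{k_j}\to u$ in $C^2_{\loc}(N)$, and $u$ is an entire solution of \eqref{eq-soliton}.

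The substance of the proof is to show $u$ extends continuously to $\partial_\infty N$ with $u|\partial_\infty N=\varphi$. For this, at each $x_0\in\partial_\infty N$ and each $\eta>0$, the task is to construct a local upper barrier $\psi^+$ and a local lower barrier $\psi^-$ on a truncated cone $C(x_0,\beta)\cap(N\setminus\bar B(o,R))$ such that $\psi^+\ge \sup\varphi$ on $\partial B(o,R)$, $\psi^+\ge\varphi(x_0)+\eta$ on the lateral boundary, $\psi^+$ is a supersolution of \eqref{eq-soliton}, and analogously for $\psi^-$. Following \cite[Section~4.1]{CHH2}, the barrier is of the form $\psi^+=\varphi(x_0)+\eta+A\cdot\Theta+B\cdot\Phi$, with $\Theta$ an angular function controlled by the lower curvature bound $-b^2$ and $\Phi$ a radially decaying function built from $f_a/f_a'$. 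The assumption \eqref{a1} makes $\int^\infty f_a/f_a'<\infty$ (so $\Phi$ actually decays), \eqref{b1}--\eqref{b2} replace the pointwise derivative control of $b$ used in \cite[Lemma 3.10]{HoVa} to bound the Hessian of $\Theta$, \eqref{ab1} replaces hypothesis (A2) of \cite[Lemma 3.15]{HoVa} in comparing values of $f_a$ at nearby points, and \eqref{ab2} guarantees that the angular term dominates the curvature-driven error terms at infinity. The comparison principle for \eqref{eq-soliton} (note $u_1-u_2$ satisfies a linear uniformly elliptic equation with no zeroth-order term on any set where gradients are bounded) applied on $C(x_0,\beta)\cap B(o,k)$ then gives $u_k\le\psi^+$ in a smaller cone and $x_0$-neighborhood; letting $k\to\infty$ and then shrinking the neighborhood and $\eta\to 0$ yields $\limsup_{x\to x_0}u(x)\le\varphi(x_0)$, and analogously $\liminf\ge\varphi(x_0)$.

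The hard step is the verification that the barrier construction of \cite{CHH2} indeed carries over under the weaker hypotheses \eqref{a1}--\eqref{ab2} rather than the assumptions of \cite{HoVa}; this amounts to rerunning the estimates on $\nabla\psi^\pm$ and the divergence quantity $\dv(\nabla\psi^\pm/\sqrt{1+|\nabla\psi^\pm|^2})-c/\sqrt{1+|\nabla\psi^\pm|^2}$ and checking at each appearance of \cite[Lemmas 3.10, 3.15]{HoVa} that \eqref{b2} and \eqref{ab1} suffice. Given this, \cite[Lemmas 4.3 and 4.7]{CHH2} can be quoted essentially verbatim to finish the barrier argument. Uniqueness is standard: if $u_1,u_2$ are two bounded solutions with the same asymptotic data, then $v_\varepsilon=u_1-u_2-\varepsilon$ is bounded, satisfies a linear uniformly elliptic equation on the set $\{v_\varepsilon>0\}$, and tends to $-\varepsilon<0$ at $\partial_\infty N$, whence $\{v_\varepsilon>0\}=\emptyset$ by the maximum principle; letting $\varepsilon\to 0$ gives $u_1\le u_2$, and symmetrically $u_1=u_2$.
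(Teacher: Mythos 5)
Your proposal is correct and follows essentially the same route as the paper: the authors likewise reduce \eqref{eq-soliton} to the $f$-minimal graph equation with $f(x,t)=-ct$, construct local barriers in truncated cones as in \cite[Section 4.1]{CHH2} with \eqref{b2} substituting for \cite[Lemma 3.10]{HoVa} and \eqref{ab1} for hypothesis (A2) in \cite[Lemma 3.15]{HoVa}, and then invoke \cite[Lemmas 4.3 and 4.7]{CHH2}. Your account of the exhaustion, the role of each hypothesis \eqref{a1}--\eqref{ab2}, and the uniqueness argument matches the paper's (itself rather terse) proof.
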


 For instance, if $f_a(t)=\sinh(\sinh t)$ and 
  \[
  b(t)^2 =2\cosh(\cosh t),
\]
then the assumptions \eqref{a1}-\eqref{ab2} hold. In this case we have radial curvature bounds
 \[
- 2\cosh\big(\cosh r(x)\big)\le K(P_x)\le
- \cosh^2 r(x) -\sinh r(x)\coth\big(\sinh r(x)\big).
 \]
 
\subsection{Global barriers and applications to the construction of bowl solitons}\label{subsec-gencase}
In this section, we are going to construct global barriers for our problem under certain assumptions on the Riemannian metric that are more general than those in Subsection~\ref{subsec-asrot}. To do so, we first focus on the asymptotics at infinity of solutions to the soliton  equation \eqref{eq-soliton}. Our idea is to make (implicit) assumptions on the metric in order to mimic the asymptotic behavior of radial solutions. In particular, we want to neglect all the non-radial terms in the equation.
We will see in the following that we are able to construct suitable metrics with pinched sectional curvature. Thanks to the knowledge of this asymptotic behavior, we are able to construct sub- and supersolutions to our equation at infinity in such a way that the difference between them goes to zero. To extend these barriers to the whole manifold, we only match these roughly using cut-off function with radial sub- and supersolutions on compact sets. 

We start with noticing that the Riemannian metric on $N$ can be written as
\[
ds^2=dr^2 + d\vartheta^2,
\]
where $r$ is the distance to a fixed point $o\in N$ and $d\vartheta^2$ is the (induced) Riemannian metric on the geodesic sphere $S(r):=S(o,r)$. Then 
\[
\Delta u=u_{rr}+u_{r}\Delta r +\Delta^{S(r)}u,
\]
where $u_{rr}=\partial_r(\partial_r u)$ and $\Delta^{S(r)}$ is the Laplacian on the Riemannian  submanifold $S(r)$. It follows that
the soliton equation \eqref{eq-soliton} is equivalent to
\begin{equation}\label{equiv-eq-sol}
u_{rr}+u_{r}\Delta r +\Delta^{S(r)}u-\frac{\Hess u(\nabla u,\nabla u)}{1+|\nabla u|^2}=c.
\end{equation}  

We write 
$$M(u)=u_{rr}+ u_r \Delta r + \Delta^{S(r)} u - H(u)-c,$$
where 
\[
H(u)=\dfrac{\langle \nabla u,\nabla W^2 \rangle}{2W^2} 
=\frac{\Hess u(\nabla u,\nabla u)}{1+|\nabla u|^2},\ 
W=\sqrt{1+|\nabla u|^2}.
\] 
We also set
$$E(u)=u_{rr}+ \Delta^{S(r)} u - H(u) ,$$
and
$$\tilde{E}(u)= u_{rr}+ \Delta^{S(r)} u .$$

For the next proposition we define a sequence of functions $v_i\colon N\to\R$ inductively by setting 
\[
v_0(x)=\int_{0}^{r(x)}\frac{c\,dt}{\Delta r\big(\gamma(t)\big)},\quad x\in N,
\]
and 
\[
v_i(x)=\begin{cases}-\int_{0}^{r(x)}\frac{M\left(\sum_{j=0}^{i-1}v_{j}\big(\gamma (t)\big)\right)}{\Delta r\big(\gamma(t)\big)}dt,& \text{if }\ \frac{M\left(\sum_{j=0}^{i-1}v_{j}\big(\gamma (t)\big)\right)}{\Delta r\big(\gamma(t)\big)}\notin L^1 (\R ) \\ \int_{r(x)}^{\infty}\frac{M\left(\sum_{j=0}^{i-1}v_{j}\big(\gamma (t)\big)\right)}{\Delta r\big(\gamma(t)\big)}dt,&\ \text{otherwise} \end{cases},\quad i\ge 1,
\]
where $\gamma$ is the unique unit speed geodesic joining $o=\gamma(0)$ and $x=\gamma\big(r(x)\big)$.

In what follows the notation $f=o_R (g)$ means that $\lim_{R\to\infty}f/g=0$.
\begin{prop}\label{prop-main}
Suppose that
\begin{enumerate}
\item[(i)]
\[
E ((1\pm \varepsilon) v_i)= o\left(M\left(\sum_{j=0}^{i-1} v_j \right)\right)
\]
and
\[
H \left(\sum_{j=0}^{i-1} v_j +(1\pm \varepsilon) v_i\right) - H\left(\sum_{j=0}^{i-1} v_j \right)- H\left((1\pm \varepsilon) v_i \right)= 
o\left(M \left(\sum_{j=0}^{i-1} v_j \right) \right)
\]
for some $\varepsilon >0$, and
\item[(ii)] there exists $i_0\in\N$ such that
\[
v_{i_0}(x)\to 0\ \text{ as }\ r(x)\to\infty,
\]
and that
\item[(iii)] for some very large $R_1$, $E(v_{i_0})$ has a constant sign on $N\setminus B(o,R_1)$.
\end{enumerate}
Then there exists $R>0$ such that
\[
M(u_1) \leq 0 \leq M(u_2)\ \text{ on }\ N\setminus B(o,R),
\]
where 
\[
u_k =\sum_{j=0}^{i_0-1}v_j +\big(1+(-1)^{k-1}\varepsilon\big)v_{i_0} =:
\sum_{j=0}^{i_0-1} v_j + (1\pm \varepsilon) v_{i_0},\quad k=1,2.
\]
Moreover, letting
$$g=u_1 -u_2,$$
we have that $|g(x)|\rightarrow 0$ as $r(x)\rightarrow \infty$. Furthermore, if for some $\varepsilon>0$, there exist  radial functions $F_{-}, F_{+}$ such that
$$M(F_{-})\ge\varepsilon,\ M(F_{+})\le -\varepsilon,\ 
\text{ and } F_{-}^{\prime \prime},F_{+}^{\prime \prime}
= o_R (1)\ \text{ for }\ r(x)\ge R.
$$ 
Then, there exist $U_1$ and $U_2$ such that
$$M(U_1)\leq 0\leq M(U_2)\ \text{ on }\ M,$$
and 
$$U_1(x)-U_2 (x)\rightarrow 0\ \text{ as }\ r(x)\rightarrow \infty .$$
\end{prop}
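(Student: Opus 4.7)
The plan proceeds in two main stages. First, I verify the exterior sub/supersolution inequalities $M(u_1)\le 0\le M(u_2)$ by exploiting the recursive cancellation built into the sequence $(v_i)$. Second, I extend to all of $N$ by gluing with the strict radial barriers $F_\pm$ via a smooth cut-off.

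For the exterior statement, the construction of $v_i$ gives $(v_i)_r\Delta r=-M\bigl(\sum_{j<i}v_j\bigr)$ in either case of its definition. Writing $w=\sum_{j<i_0}v_j$ and $u_k=w+(1\pm\varepsilon)v_{i_0}$ (with $+$ for $k=1$ and $-$ for $k=2$), a direct expansion of $M$ into its linear part $\Delta=(\cdot)_{rr}+(\cdot)_r\Delta r+\Delta^{S(r)}(\cdot)$ and the nonlinear correction $-H-c$, using the identity above to cancel $(v_{i_0})_r\Delta r$, yields
\[
M(u_k)=\mp\varepsilon\,M(w)+E\bigl((1\pm\varepsilon)v_{i_0}\bigr)-\bigl[H(u_k)-H(w)-H\bigl((1\pm\varepsilon)v_{i_0}\bigr)\bigr].
\]
By assumption (i) both of the last two summands are $o\bigl(M(w)\bigr)$ as $r\to\infty$, so $M(u_k)=\mp\varepsilon\,M(w)\bigl(1+o(1)\bigr)$. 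To pin down the sign of $M(w)$, I would combine (ii) and (iii): since $E(v_{i_0})\approx (v_{i_0})_{rr}$ when $v_{i_0}$ is small (the angular term $\Delta^{S(r)}v_{i_0}$ and the nonlinear $H(v_{i_0})$ are of higher order), (iii) forces $(v_{i_0})_{rr}$ to have constant sign for $r\ge R_1$, so $(v_{i_0})_r$ is monotone; combined with $v_{i_0}\to 0$ this forces $(v_{i_0})_r\to 0$ monotonically, so $(v_{i_0})_r$ has constant sign for $r$ large, and therefore so does $M(w)=-(v_{i_0})_r\Delta r$. After a possible relabelling $u_1\leftrightarrow u_2$, we obtain $M(u_1)\le 0\le M(u_2)$ on $N\setminus B(o,R)$ for a suitable $R$. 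Finally $g=u_1-u_2=2\varepsilon v_{i_0}$, so $|g(x)|\to 0$ as $r(x)\to\infty$ by (ii).

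For the global construction, fix $R<R_1<R_2$, pick a smooth radial cut-off $\chi(r)$ with $\chi\equiv 0$ on $[0,R_1]$ and $\chi\equiv 1$ on $[R_2,\infty)$, and choose constants $K_\pm\in\R$ so that the shifted radial barriers $\tilde F_\pm:=F_\pm+K_\pm$ (still satisfying $M(\tilde F_\pm)=M(F_\pm)$, since $M$ is invariant under additive constants) compare continuously with $u_1,u_2$ on the transition annulus. Set
\[
U_1=\chi\,u_1+(1-\chi)\tilde F_+,\qquad U_2=\chi\,u_2+(1-\chi)\tilde F_-.
\]
On $\{r\le R_1\}$ one has $U_1=\tilde F_+$ and $U_2=\tilde F_-$, so $M(U_1)\le-\varepsilon$ and $M(U_2)\ge\varepsilon$; on $\{r\ge R_2\}$, $U_k=u_k$ and the exterior result applies. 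In the transition annulus, expanding $M(U_k)$ shows the leading contribution is $\chi\,M(u_k)+(1-\chi)\,M(F_\pm)$, which already has the correct sign with a strict margin from $M(F_\pm)$; the remainder consists of terms involving $\chi',\chi''$ acting on $u_k-\tilde F_\pm$ together with the nonlinear mismatch $H(U_k)-\chi H(u_k)-(1-\chi)H(\tilde F_\pm)$, and these are $o_R(1)$ as $R_1\to\infty$ thanks to $F_\pm''=o_R(1)$ and the bounded differences available on the compact annulus. Choosing $R_1$ large enough absorbs these errors into the strict margin $\varepsilon$, giving $M(U_1)\le 0\le M(U_2)$ on all of $N$. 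Since $U_1-U_2=g$ on $\{r\ge R_2\}$, one has $U_1(x)-U_2(x)\to 0$ as $r(x)\to\infty$ by the first part.

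The main obstacle is the sign-tracking in the exterior stage: converting (iii) into a definite sign for $M(w)$ requires correctly identifying the leading order of $E(v_{i_0})$ with $(v_{i_0})_{rr}$, for which (ii) and the resulting smallness of $v_{i_0}$ are essential. The global gluing is conceptually routine but the nonlinear $H$-mismatch in the transition region must be controlled carefully using the decay $F_\pm''=o_R(1)$.
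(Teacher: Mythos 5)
Your proposal follows essentially the same route as the paper: the identity $M(v+w)=M(v)+E(w)+w_r\,\Delta r-\bigl(H(v+w)-H(v)-H(w)\bigr)$ together with $(v_i)_r\,\Delta r=-M\bigl(\sum_{j<i}v_j\bigr)$ gives the exterior inequalities exactly as in the paper, and your cut-off gluing of $u_k$ with the (shifted) radial barriers $F_{\pm}$, including the $o_R(1)$ control of the $\chi'$, $\chi''$ and nonlinear mismatch terms, is the paper's construction with the roles of $\chi$ and $1-\chi$ interchanged. Your added sign-tracking for $M\bigl(\sum_{j<i_0}v_j\bigr)$ via $E(v_{i_0})\approx(v_{i_0})_{rr}$ and the monotone convergence of $(v_{i_0})_r$ is a heuristic elaboration of a point the paper leaves implicit (``sub- or supersolution depending on the sign of $M(\sum_j v_j)$''), so the argument is correct at the paper's own level of rigour.
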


\begin{proof}
We proceed by induction. 
Notice that, for any functions $v$ and $w$, we have
\begin{align*}
M(v+w)&= \Delta v +\Delta w - H(v+w)-c \\
&= M(v) +H(v) +\Delta w - H(v+w)\\
&=M(v)+E (w) + w_r \Delta r - \big(H(v+w) - H(v)-H(w)\big).
\end{align*}
Next we substitute $v$ by $\sum_{j=0}^{i-1} v_j $ and $w$ by $(1\pm \varepsilon ) v_i $, for $\varepsilon >0$. We get
\begin{align*}
M &\left(\sum_{j=0}^{i-1} v_j +(1\pm \varepsilon)v_i\right)  = E( (1 \pm \varepsilon)v_i) \pm \varepsilon M \left(\sum_{j=0}^{i-1} v_j (s)\right)\\
& -\left( H \left(\sum_{j=0}^{i-1} v_j +(1\pm \varepsilon) v_i\right) - H\left(\sum_{j=0}^{i-1} v_j \right)-H ((1\pm \varepsilon) v_i)\right).
\end{align*}
 Since by assumption, 
 \begin{align*}
 E ((1\pm \varepsilon) v_i)
 &= o\left(M \left(\sum_{j=0}^{i-1} v_j \right)\right)\\
 &= H\left(\sum_{j=0}^{i-1} v_j +(1\pm \varepsilon)v_i\right) 
 - H\left(\sum_{j=0}^{i-1} v_j \right) -H ((1\pm \varepsilon) v_i), 
 \end{align*} 
 one can show that $\sum_{j=0}^{i-1} v_j + (1\pm \varepsilon) v_i $ is a sub- or supersolution depending on the sign of $M \big(\sum_{j=0}^{i-1} v_j \big) $. We stop as soon as $v_i (x) \rightarrow 0$ as $r(x)\rightarrow \infty$.

 Next we construct a global sub- and supersolutions. To do so, let $\chi(x)$ be a radial function such that 
\[
 \chi (x)=\begin{cases}1,& \text{if}\ r(x)\leq A;\\ 0,& \text{if}\ r(x)\geq B \end{cases}
 \] 
 for some $A,B$ to be determined later. 
 We set $U_2= \chi F_{-} + (1-\chi) u_2$. If $B\geq R$, then it is easy to see that $M(U_2)\geq 0 $ on $B(o,A) \cup \big(N\setminus B(o,B)\big)$. By assumption, we have that $\big(\tilde{E} (u_2)-H(u_2)\big)(x)\rightarrow 0 $ and $F_{-}^{\prime \prime}(x)\rightarrow 0$ as $r(x)\rightarrow \infty$. So, for $r(x)\geq A$, we have
$$M (U_2) = \frac{(U_2)_{rr}}{1+ |\nabla U_2|^2}+(U_2)_r \Delta r -c +o_A (1).$$
Next, we choose $B$ such that 
\begin{align*}
(U_2)_r & = \chi^\prime (F_{-} - u_2) + \chi F_{-}^\prime +(1-\chi) (u_2)_r \\
& = \chi F_{-}^\prime +(1-\chi) (u_2)_r +o_R (1/\Delta r )
\end{align*}
and 
\begin{align*}
(U_2)_{rr} &= \chi^{\prime \prime} (F_{-} - u_2) + 2\chi' (F_{-}^\prime -(u_2)_r) + \chi F_{-}^{\prime\prime} +(1-\chi) (u_2)_{rr} \\
& =\chi F_{-}^{\prime\prime} +(1-\chi) (u_2)_{rr} +o_R (1/ \Delta r).
\end{align*} 
 Thanks to this choice, we get
 \begin{align*}
 M(U_2)=\ &\frac{ \chi F_{-}^{\prime\prime} +(1-\chi) (u_2)_{rr} +o_R (1/\Delta r)}{1+ |\nabla U_2|^2}\\
 & +  
\big(\chi F_{-}^\prime +(1-\chi)(u_2)_r\big)\Delta r -c +o_R (1). 
 \end{align*} 
 By assumptions, we have
$$\frac{ \chi F_{-}^{\prime\prime} +(1-\chi)(u_2)_{rr}+o_R (1/\Delta r)}{1+ |\nabla U_2|^2}=o_R (1),$$
and
$$(u_2)_r \Delta r -c=o_R(1).$$
Since $M(F_{-})= F_{-}^\prime \Delta r -c +o_R (1) \geq \varepsilon$, we conclude that
$$M(U_2)= \chi (F_{-}^\prime -(u_2)_r) \Delta r +o_R(1)\geq \varepsilon/2 . $$
We proceed in the same way to prove that $U_1 = \chi F_{+} + (1-\chi) u_1$ is a global supersolution. 
\end{proof}

\begin{thm}\label{thm-main2} Let $(N,g)$ be a Cartan-Hadamard manifold whose Riemannian metric $g$ admits functions $v_i,\ i\ge 0,\ F_{-}$, and $F_{+}$ satisfying the assumptions in Proposition~\ref{prop-main}. Then there exists a global bowl soliton on $(N,g)$.
\end{thm}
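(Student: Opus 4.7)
The plan is to adapt the proof of Theorem \ref{thm-globexist}, using as global barriers the sub- and supersolutions $U_1, U_2$ produced by Proposition \ref{prop-main} in place of the radial barriers coming from the rotationally symmetric models $N_a, N_b$. By that proposition and the hypothesis of the theorem, there exist $U_1, U_2\in C^2(N)$ with $M(U_1)\le 0\le M(U_2)$ on $N$ and $|U_1(x)-U_2(x)|\to 0$ as $r(x)\to\infty$. In particular $U_2-U_1$ is continuous and vanishes at infinity, hence globally bounded on $N$; moreover the soliton equation \eqref{eq-soliton} is invariant under the substitution $u\mapsto u+\mathrm{const}$, so replacing $U_2$ by $U_2+C_0$ for a suitable $C_0\ge 0$ we may arrange $U_1\le U_2$ everywhere on $N$.

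For each $k\in\mathbb{N}$, as in the proof of Theorem~\ref{thm-globexist}, Lemma~\ref{lem-ballexist} supplies a solution $u_k\in C^2(B(o,k))\cap C(\overline{B(o,k)})$ of \eqref{eq-soliton} on the geodesic ball $B(o,k)$ with a constant boundary value $m_k$. Choosing $m_k:=\sup_{\partial B(o,k)}U_1$ ensures $U_1\le m_k$ on $\partial B(o,k)$; and if $C$ is a constant (independent of $k$) with $m_k\le U_2+C$ on $\partial B(o,k)$ for every $k$, then the comparison principle applied separately to the subsolution $U_1$ and the supersolution $U_2+C$ yields
\[
U_1 \le u_k \le U_2 + C \qquad\text{in } B(o,k).
\]
Consequently, the sequence $(u_k)$ is locally uniformly bounded on $N$.

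The final step is to invoke the interior gradient estimate \cite[Lemma 2.3]{CHH2}, which together with the above $L^\infty$ bound furnishes uniform $C^1$ control of $u_k$ on every compact $K\subset N$; Schauder theory then upgrades this to locally uniform $C^{2,\alpha}$ bounds, and a diagonal Arzel\`a--Ascoli extraction produces a subsequence of $(u_k)$ converging in $C^2_{\loc}(N)$ to an entire $C^2$ solution $u$ of \eqref{eq-soliton}, whose graph in $N\times\R$ is the desired bowl soliton. The step I anticipate as the main technical obstacle is the choice of the $k$-uniform constant $C$ satisfying $m_k\le U_2+C$ on $\partial B(o,k)$: since $U_1, U_2$ need not be radial, their restrictions to $\partial B(o,k)$ are not constants, and one must leverage both the global bound $\sup_N(U_2-U_1)<\infty$ and the asymptotic decay $|U_1-U_2|\to 0$ to control the oscillation of these restrictions along the spheres. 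Alternatively, one can sidestep this point by running a Perron-type construction directly, producing a solution sandwiched between $U_1$ and $U_2+C_0$ without passing through ball approximations.
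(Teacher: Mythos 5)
There is a genuine gap at exactly the point you flag as the ``main technical obstacle,'' and it cannot be repaired along the route you propose. Your scheme requires a $k$-independent constant $C$ with
\[
\sup_{\partial B(o,k)}U_1 \;=\; m_k \;\le\; U_2+C \quad\text{on } \partial B(o,k),
\qquad\text{i.e.}\qquad
\sup_{\partial B(o,k)}U_1-\inf_{\partial B(o,k)}U_2\le C .
\]
Since $|U_1-U_2|\to 0$ at infinity, the left-hand side is essentially $\osc_{\partial B(o,k)}U_1$, and the decay of the \emph{difference} $U_1-U_2$ gives no control whatsoever on the oscillation of each barrier separately along geodesic spheres. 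In the situations Proposition~\ref{prop-main} is designed for this oscillation is in fact unbounded: e.g.\ in Corollary~\ref{firstcor} the leading term $v_0$ behaves like $c\,r/(a\cos^2\theta+b\sin^2\theta)$, so $\osc_{\partial B(o,k)}U_1\sim ck(1/a-1/b)\to\infty$. Hence no uniform $C$ exists, the sandwich $U_1\le u_k\le U_2+C$ cannot be arranged with constant boundary data $m_k$, and the local uniform bound on $(u_k)$ — the heart of the compactness argument — is not obtained. (There is also a persistent mix-up of which of $U_1,U_2$ is the sub- and which the supersolution: with $M(U_1)\le 0\le M(U_2)$ the function $U_1$ is the supersolution, so for a barrier argument one needs $U_2\le U_1$, not $U_1\le U_2$; but this is cosmetic next to the oscillation issue.)

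The paper avoids the problem by never using geodesic balls or constant boundary values at this stage. One first shows $U_1\ge U_2$ on all of $N$ by applying the comparison principle on the $y_0$-component of $\{U_1<U_2-\delta/2\}$, which is relatively compact precisely because $U_1-U_2\to 0$ at infinity. One then exhausts $N$ by the relatively compact sets $\Omega_j=\{x\colon U_1(x)>U_2(x)+1/j\}$, on whose boundaries $U_1$ and $U_2+1/j$ coincide; the Dirichlet problem is solved in $\Omega_j$ with this common (non-constant) boundary datum, so both barriers attain the boundary values exactly and the sandwich $U_2+1/j\le u_j\le U_1$ is automatic. The uniform local bounds then come from the two fixed global barriers, and the limit is extracted as in Theorem~\ref{thm-globexist}. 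Your closing remark about a Perron-type construction sandwiched between the ordered global barriers is a viable alternative in spirit, but as stated it is only a gesture; the main line of your argument, as written, does not go through.
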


\begin{proof}
Let $U_1$ be an entire supersolution and $U_2$ an entire subsolution, with $U_1(x)-U_2(x)\to 0$ as 
$r(x)\to\infty$,
provided by Proposition~\ref{prop-main}. It follows from the comparison principle that $U_1(x)\ge U_2(x)$ for every $x\in N$. Indeed, if $U_1(y_0)<U_2(y_0)$ for some 
$y_0\in N$, let $D$ be the $y_0$-component of the set 
$\{x\in N\colon U_1(x)<U_2(x)-\delta/2\}$, with $\delta=U_2(y_0)-U_1(y_0)>0$. Then $D$ is an open relatively compact subset of $N$ and $U_1(x)=U_2(x)-\delta/2$ on $\partial D$. Hence $U_1(x)=U_2(x)-\delta/2$ in $D$ by the comparison principle leading to a contradiction. Now the sets 
\[
\Omega_j=\{x\in N\colon U_1(x)>U_2(x)+1/j\},
\]
for all sufficiently large $j$, exhaust $N$. We can use $U_1$ and $U_2+1/j$ as upper and lower barriers in $\Omega_j$ in a similar fashion than $u_b$ and $u_a$ in Theorem~\ref{thm-globexist} to obtain a global solution. 
\end{proof}

Next, we use the previous results to prove the existence of bowl solitons on two manifolds with pinched sectional curvatures. The sectional curvatures of the first one are pinched between two arbitrary negative constants (which are attained for some planes) while the sectional curvatures of the second one go quadratically to $0$ at infinity and take the values $\alpha^2/r^2$ and $\beta^2/r^2$, $\beta>\alpha>4\sqrt{5}$, for some planes. 

\begin{cor}\label{firstcor}
Let $(N^2,g)$ be a 2-dimensional Cartan-Hadamard manifold  with the Riemannian metric $g =dr^2 + h(r,\theta)^2 d\theta^2$, where 
\[
h(r,\theta)=\frac{1}{a\cos^2 \theta +b\sin^2 \theta}\sinh (ar\cos^2 \theta +b r \sin^2 \theta).
\] 
Then the assumptions of Proposition~\ref{prop-main} are satisfied. In particular, there exists a bowl soliton on $(N^2,g)$.
\end{cor}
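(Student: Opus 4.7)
The plan is to verify the hypotheses of Proposition~\ref{prop-main} for this explicit metric and then invoke Theorem~\ref{thm-main2}. Write $\alpha(\theta) = a\cos^2\theta + b\sin^2\theta$, so that $h(r,\theta) = \sinh(\alpha r)/\alpha$. I would first do the routine geometric bookkeeping: in a two-dimensional metric of the form $dr^2 + h^2 d\theta^2$ the Gauss curvature is $-h_{rr}/h = -\alpha(\theta)^2 \in [-b^2, -a^2]$ (assuming $a \le b$); combined with $h(r,\theta) = r + O(r^3)$ at the origin and geodesic completeness, this shows $(N^2,g)$ is Cartan-Hadamard with the stated radial curvature pinching. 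The relevant operators read $\Delta r = \alpha\coth(\alpha r)$ and $\Delta^{S(r)} u = h^{-2} u_{\theta\theta} - h^{-3} h_\theta\, u_\theta$.

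Since the curves $\theta = \mathrm{const}$ are radial geodesics in this metric, the first iterate of Proposition~\ref{prop-main} is explicit:
\[
v_0(r,\theta) = \int_0^r \frac{c\, dt}{\alpha\coth(\alpha t)} = \frac{c}{\alpha(\theta)^2}\,\ln\cosh\bigl(\alpha(\theta)\, r\bigr).
\]
Expanding $\ln\cosh(x) = x - \ln 2 + O(e^{-2x})$ and using $\alpha_\theta = (b-a)\sin 2\theta$, one finds $(v_0)_r \to c/\alpha$ while $(v_0)_\theta$ grows at most linearly in $r$; since $h$ grows exponentially, $(v_0)_\theta/h(r,\theta) = O(re^{-\alpha r})$. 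This exponential smallness of the tangential gradient --- the key feature engineered by the particular form of $h$ --- guarantees that $\Delta^{S(r)} v_0$ and the non-radial part of $H(v_0)$ decay exponentially, leaving only the radial, model-like part in $M(v_0)$. I would then iterate, producing $v_1, v_2, \ldots$ of comparable exponentially-decaying character, and identify some finite $i_0$ for which $v_{i_0} \to 0$ at infinity with $E(v_{i_0})$ of definite sign outside a large ball. Condition~(i) of Proposition~\ref{prop-main} reduces in each case to showing that the tangential contributions to $\Delta^{S(r)} v_i$ and to the mixed term $H\!\bigl(\sum_{j<i} v_j + (1\pm\varepsilon)v_i\bigr) - H\!\bigl(\sum_{j<i} v_j\bigr) - H((1\pm\varepsilon)v_i)$ are exponentially small and hence dominated by $M\!\bigl(\sum_{j<i}v_j\bigr)$.

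For the radial barriers $F_\pm$ required in the second part of Proposition~\ref{prop-main}, I would take additive perturbations of the sub- and supersolutions from Lemma~\ref{sub-sup-lemma}, for instance $F_- = u_a + \delta r$ and $F_+ = u_b - \delta r$ with $\delta > 0$ small: since $u_a'(r)\to c/a$ and $u_b'(r)\to c/b$ by Proposition~\ref{prop8} and $\Delta r \to \alpha(\theta) \in [a,b]$, the $\delta$-shift produces the strict inequalities $M(F_-) \ge \varepsilon$ and $M(F_+) \le -\varepsilon$ uniformly outside a large ball, while $F_\pm''(r) \to 0$ from the same asymptotics. Proposition~\ref{prop-main} then yields global sub- and supersolutions whose difference tends to zero at infinity, and Theorem~\ref{thm-main2} delivers the bowl soliton. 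The main obstacle is the middle paragraph: the exponential-smallness estimate for the tangential terms must propagate through the higher iterates $v_i$, and tracking the sizes of $M(v_0), M(v_0+v_1), \ldots$ requires careful bookkeeping, since the angular derivatives of $\alpha$ enter at each stage and must be weighed against the (only exponential) decay in $r$ before one can close the $o$-estimates in hypothesis~(i).
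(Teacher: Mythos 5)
Your proposal follows essentially the same route as the paper: the explicit formula $v_0=(c/\alpha^2)\ln\cosh(\alpha r)$ with $\Delta r=\alpha\coth(\alpha r)$, the observation that the angular derivatives of $v_0$ grow only polynomially while $h^{-1}$ decays exponentially so that all non-radial terms in $M(v_0)$ are exponentially small, and radial barriers $F_\pm$ glued in by Proposition~\ref{prop-main}. The only differences are cosmetic: the paper notes that the iteration already terminates at $i_0=1$ (since $M(v_0)$ is exponentially small, $v_1(x)=\int_{r(x)}^\infty M(v_0)(h/h_r)\,dt\to 0$, so your ``identify some finite $i_0$'' is immediate rather than an obstacle), and it takes $F_\pm$ to be exact radial solutions $w_P$ on constant-curvature models with perturbed soliton constants $c_P$ instead of your perturbations $u_a+\delta r$, $u_b-\delta r$, which serve the same purpose.
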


\begin{proof}
In this metric, noticing that $\Delta r= h_r/h$, our equation rewrites as
\begin{align*}
&M(u) +c= u_{rr} +u_r \frac{h_r}{h}+ \frac{u_{\theta \theta}}{h^2} -\\ 
&\frac{(u_r)^2 u_{rr} +h^{-2} u_r u_\theta u_{r\theta} - u_\theta^2 u_r h_r h^{-3} + h^{-2}u_r u_{r\theta} u_\theta + u_\theta^2 u_{\theta  \theta } h^{-4} - u_\theta^3 h_\theta h^{-3}}{1+(u_r)^2 + h^{-2} u_\theta^2} .
\end{align*}
In this metric, $v_0$ is given by
$$v_0 (x)=c \int_0^r \dfrac{ h(\gamma (t))}{h_r (\gamma (t))}dt.$$
From this expression, we observe that $v_0(x)\leq C r(x)$,
$$(v_0)_r (x) = c \dfrac{h(x)}{h_r (x)} = \dfrac{c\, \tanh (ar\cos^2 \theta +br \sin^2 \theta)}{a \cos^2 \theta +b \sin^2 \theta}  \leq C,$$
 and 
\[
(v_0)_{rr}(x)=c/h_r^2(x)= \dfrac{c}{\cosh^2 (ar \cos^2 \theta +b r\sin^2 \theta ) } \leq c (\max \{e^{-ar},e^{-br} \} )^{\tilde{c}},
\]
for some $\tilde{c}>0$. Concerning the spherical derivatives, very rough estimates show that
$$|(v_0)_{\theta} (x)|,\ |(v_0)_{\theta \theta} (x)|,\ |(v_0)_{r \theta} (x)|\leq Cr(x).$$
Using the previous estimates and the fact that $h$ decays exponentially, we deduce that there exists a constant $\tilde{c}$ such that
\begin{align*}
& \left| u_{rr} + \frac{u_{\theta \theta}}{h^2} -\frac{(u_r)^2 u_{rr} +h^{-2} u_r u_\theta u_{r\theta}+ h^{-2}u_r u_{r\theta} u_\theta + u_\theta^2 u_{\theta  \theta } h^{-4}}{1+(u_r)^2 + h^{-2} u_\theta^2}\right|\\
& \leq   (\max \{e^{-ar},e^{-br} \} )^{\tilde{c}}.
\end{align*}
Noticing that $|h_r h^{-3}|, \ |h_\theta h^{-3}| \leq h^{-2}$, we also have
$$| u_\theta^2 u_r h_r h^{-3}|,\ | u_\theta^3 h_\theta h^{-3}|  \leq   (\max \{e^{-ar},e^{-br} \} )^{\tilde{c}}.$$
Finally, since $u_r h_r /h =c$, we obtain that
$$|M(v_0)| \leq  (\max \{e^{-ar},e^{-br} \} )^{\tilde{c}}. $$
This proves that (i) of Proposition \ref{prop-main} holds for $i=0$. Thanks to this estimate, we get that
\begin{align*}
|v_1 (x)| &= \left|\int_{r(x)}^\infty M\big(v_0 (\gamma (t)\big) \dfrac{h (\gamma (t))}{h_r (\gamma (t))} dt\right|
\\
&\leq C \int_{r(x)}^\infty  \big(\max \{e^{-a\gamma (t)},e^{-b\gamma (t)} \} \big)^{\tilde{c}} dt. 
\end{align*}
Therefore, $v_1 (x)\rightarrow 0$ as $r(x)\rightarrow \infty$ which is (ii) of Proposition \ref{prop-main}.  Finally, notice that $v_1$ and none of the terms of $E(v_1)$ oscillates at infinity so clearly $E(v_1(x))$ has a sign provided that $r(x)$ is large enough. Thus (iii) of Proposition \ref{prop-main} holds. To conclude the proof, we are left with the existence of $F_{\pm}$. Recall from de Lira and Martin \cite{LM}, that if $P$ is a rotationally symmetric manifold with constant sectional curvature, there exists a radially symmetric function $w_P$ such that $M(w_P)=0$, for any $c_P\in \R$ and $(w_P)_{rr}(x)$ goes to $0$ when $r(x)$ goes to infinity. Since the sectional curvatures of our manifold are pinched between $-a^2$ and $-b^2$, $F_{\pm}$ can be obtained by taking $w_P$ for some appropriate manifolds $P$ and some constant $c_P$ by standard comparison theorems. This proves the corollary.
\end{proof}

\begin{cor}\label{secondcor}
Let $(N^2,g)$ be a 2-dimensional Cartan-Hadamard manifold  with the Riemannian metric $g =dr^2 + h(r,\theta)^2 d\theta^2$, where 
\[
h(r,\theta)=\cos^2 \theta f_a (r) + \sin^2 \theta f_b (r),
\]
 where $f_b''(r)=\frac{\beta^2}{r^2}f_b(r)$, 
  $f_a'' (r)= \frac{\alpha^2}{r^2} f_a (r)$, with  $\beta>\alpha>4\sqrt{5}$. Then the assumptions of the previous proposition are satisfied. In particular, there exists a bowl soliton on $(N^2,g)$.
\end{cor}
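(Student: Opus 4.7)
The plan is to verify the three hypotheses of Proposition~\ref{prop-main} and then exhibit the radial barriers $F_\pm$, so that the existence of a bowl soliton follows from Theorem~\ref{thm-main2}. The strategy parallels that of Corollary~\ref{firstcor} but with polynomial rather than exponential asymptotics, and the condition $\alpha>4\sqrt 5$ is the algebraic margin that makes the polynomial decay of $h^{-1}$ dominate the polynomial growth of the $\theta$-derivatives. First, the Euler equation $f_a''=\alpha^2 f_a/r^2$ has fundamental solutions $r^{k_a^\pm}$ with $k_a^\pm=\tfrac12(1\pm\sqrt{1+4\alpha^2})$, so the Jacobi profile is asymptotic to $c\,r^{k_a}$ with $k_a:=k_a^+$, and similarly $f_b\sim c\,r^{k_b}$; the hypothesis $\alpha>4\sqrt 5$ gives $k_b>k_a>\tfrac12(1+\sqrt{321})>9$.

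I then write the soliton operator exactly as in the proof of Corollary~\ref{firstcor} and evaluate it on
\[
v_0(r,\theta)=c\int_0^r\frac{h(t,\theta)}{h_r(t,\theta)}\,dt.
\]
The defining identity $(v_0)_r\,\Delta r\equiv c$ collapses $M(v_0)$ to $(v_0)_{rr}+\Delta^{S(r)}v_0-H(v_0)$. Because $h/h_r\sim r/k(\theta)$, where $k(\theta)=k_b$ for $\theta$ bounded away from $\{0,\pi\}$ and $k(\theta)=k_a$ at the poles, one has $(v_0)_r\to\infty$ while $(v_0)_{rr}$ remains bounded; the algebraic cancellation
\[
H(v_0)=(v_0)_{rr}\,\frac{(v_0)_r^2}{1+|\nabla v_0|^2}+(\text{lower order})
\]
then gives $(v_0)_{rr}-H(v_0)=O(r^{-2})$, while $h(r,\theta)\gtrsim r^{k_a}$ with $k_a>9$ drowns the angular Laplacian $\Delta^{S(r)}v_0=O(r^{2-2k_a})$. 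Hence $M(v_0)=O(r^{-2})$ uniformly in $\theta$.

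The same bookkeeping can be iterated. Since $\Delta r\sim k(\theta)/r$, the first corrector $v_1$ defined in Proposition~\ref{prop-main} obeys $(v_1)_r=O(r^{-1})$, and each of $(v_1)_{rr}$, $H(v_1)$, $\Delta^{S(r)}v_1$ lies at least a factor $r^{-\delta}$, $\delta=\delta(\alpha)>0$, below $M(v_0)$; the nonlinear cross term $H(v_0+v_1)-H(v_0)-H(v_1)$ is of the same lower order because $|\nabla v_1|\ll|\nabla v_0|$ and all stray factors of $h^{-q}u_\theta^p$ vanish as large negative powers of $r$ thanks to $k_a>9$. This gives~(i) at the first step; iterating a bounded number of times produces $v_{i_0}$ that genuinely tends to $0$ at infinity, which is~(ii). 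Condition~(iii) is automatic because the leading coefficients in the expansions of the $v_i$ and their derivatives are built from $h/h_r$ and its derivatives, which are eventually monotone of definite sign. For the barriers $F_\pm$ I mimic the last step of Corollary~\ref{firstcor}: the Gaussian curvature $K=-h_{rr}/h$ is pinched between $-\beta^2/r^2$ and $-\alpha^2/r^2$ outside a compact set, so Laplace comparison allows me to import, as radial sub- and supersolutions on $(N^2,g)$, the rotationally symmetric soliton profiles $w_{P_\pm}$ of two models $P_\pm$ of extreme radial curvature. Proposition~\ref{prop8x} applied to these models gives $w_{P_\pm}''(r)\to 0$ as $r\to\infty$, and choosing the integration constants appropriately one has $M(F_-)\ge\varepsilon$ and $M(F_+)\le-\varepsilon$ on $N$. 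Proposition~\ref{prop-main} and Theorem~\ref{thm-main2} then produce the bowl soliton.

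The main obstacle I anticipate is the uniformity in $\theta$ of the iteration: the effective growth exponent $k(\theta)$ drops from $k_b$ to $k_a$ across an angular transition region of width $\sim r^{-(k_b-k_a)/2}$ around the poles $\theta\in\{0,\pi\}$, and all estimates on the iterates $v_i$ and on $M\bigl(\sum_{j\le i}v_j\bigr)$ have to be established in a form that is uniform across this transition. The quantitative margin $\alpha>4\sqrt 5$, equivalently $k_a>9$, is exactly the algebraic threshold that makes $h^{-1}$ decay fast enough to absorb the $\theta$-derivatives of $h/h_r$ throughout the transition region.
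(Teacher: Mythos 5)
Your overall route is the paper's: the same iterates $v_i$, the same observation that $(v_0)_r\Delta r\equiv c$ reduces $M(v_0)$ to $(v_0)_{rr}/(1+|\nabla v_0|^2)$ plus angular and cross terms that are crushed by the polynomial growth of $h$, the same ``no oscillation'' argument for (iii), and the same comparison-based construction of $F_\pm$ from rotationally symmetric models. There is, however, one genuine error in your last step. You claim that Proposition~\ref{prop8x} gives $w_{P_\pm}''(r)\to 0$. It does not in this setting: for a model with $\xi(r)\approx r^{k}$ one has $\xi/\xi'\approx r/k$, so the radial profile satisfies $u'(r)=\tfrac{c}{n-1}\tfrac{\xi}{\xi'}+o(1)\approx \tfrac{c}{(n-1)k}\,r$ and hence $u''(r)\to \tfrac{c}{(n-1)k}\neq 0$. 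Thus the hypothesis $F_\pm''=o_R(1)$ of Proposition~\ref{prop-main} fails literally for the barriers you propose (in contrast with Corollary~\ref{firstcor}, where $\xi/\xi'$ tends to a constant and $w_P''\to 0$ does hold). The paper closes exactly this gap by remarking that the proof of Proposition~\ref{prop-main} only uses $F_\pm''/(1+|\nabla U_2|^2)\to 0$, which is satisfied here because $|\nabla U_2|\to\infty$ (linearly in $r$). Without that remark, your appeal to Proposition~\ref{prop-main} is not justified.

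Two smaller points. First, your statement that ``iterating a bounded number of times produces $v_{i_0}$ that tends to $0$'' is asserted rather than shown; the paper makes this concrete: $M(v_0)\approx r^{-2}$ forces $v_1\approx\log r$, which does \emph{not} decay, and one must compute $M(v_0+v_1)=O(r^{-4})$ (whose leading term matches \eqref{limeta}) to conclude that $v_2\to 0$, so $i_0=2$. Your sketch is compatible with this but does not establish that the gain per step suffices for termination. Second, your identification of the threshold is not quite what drives the paper's estimates: what is actually needed is $r^5/h=o(1)$ so that the angular Laplacian and the terms $h^{-q}u_\theta^p$ are $o(r^{-4})$ in the second iteration; your margin $k_a>9$ (from the indicial root $\tfrac12(1+\sqrt{1+4\alpha^2})$) is amply sufficient for this, so no harm is done, but the claim that $k_a>9$ is ``exactly the algebraic threshold'' is not the mechanism used.
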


\begin{proof}
To simplify notation, we take $c=1$. In this case, it is easy to see that $|v_0 (x)|,\ |(v_0)_\theta (x)|,\ |(v_0)_{\theta \theta} (x)|\leq C r(x)^2 $, $|(v_0)_r (x)|, |(v_0)_{r\theta} (x)|\leq C r(x)$ and $|(v_0)_{rr}(x)|\leq C$. We have
\begin{align*}
M(v_0) &= \dfrac{(v_0)_{rr}}{1+(v_0)_r^2 + h^{-2} (v_0)_\theta^2} + \frac{(v_0)_{\theta \theta}}{h^2} \\ 
&-\frac{h^{-2} (v_0)_r (v_0)_\theta (v_0)_{r\theta} - (v_0)_\theta^2 (v_0)_r h_r h^{-3} + h^{-2}(v_0)_r (v_0)_{r\theta} (v_0)_\theta}{1+((v_0)_r)^2 + h^{-2} (v_0)_\theta^2}\\
& + \frac{(v_0)_\theta^2 (v_0)_{\theta  \theta } h^{-4} - (v_0)_\theta^3 h_\theta h^{-3}}{1+((v_0)_r)^2 + h^{-2} (v_0)_\theta^2}.
\end{align*}
Since 
\[
f_a (r)\approx r^{\frac{1+\sqrt{1+\alpha^2}}{2}}
\]  when $r$ is large enough and $\alpha ,\beta >4\sqrt{5}$, we get that 
\begin{equation}
\label{hsmalle1}
\dfrac{r^5}{h}=o(1).
\end{equation}
 This implies in particular that 
$$ h^{-2} (v_0)_\theta^2 =o( (v_0)_r^2 ),$$
 where we used that $(v_0)_r (x)\approx r (x)$ when $r$ is large enough. Next, we claim that
$$M(v_0)(x) \approx  \dfrac{(v_0)_{rr}}{1+(v_0)_r^2 + h^{-2} (v_0)_\theta^2} \approx  \dfrac{(v_0)_{rr}}{(v_0)_r^2 } \approx r^{-2}(x), $$
where we used that $(v_0)_{rr} (x) \approx 1 $. Indeed, thanks to \eqref{hsmalle1}, we see that
$$ \frac{(v_0)_{\theta \theta}}{h^2}=o( r^{-2} ). $$ 
Since
\begin{gather*}
|(v_0)_r (v_0)_\theta (v_0)_{r\theta}|,\ |(v_0)_r (v_0)_{r\theta} (v_0)_\theta |\leq r^4,\\
|(v_0)_\theta^2 (v_0)_r  |\leq r^5,\\
|(v_0)_\theta^2 (v_0)_{\theta  \theta } |,\ | (v_0)_\theta^3 h_\theta h^{-1}| \leq r^6,
\end{gather*}
and $|h_r /h| \leq 1/r$ and $|h_\theta /h|\leq C$, using once more \eqref{hsmalle1}, we deduce that 
\begin{align*}
B(v_0)&= h^{-2} (v_0)_r (v_0)_\theta (v_0)_{r\theta} - (v_0)_\theta^2 (v_0)_r h_r h^{-3} \\
&\quad + h^{-2}(v_0)_r (v_0)_{r\theta} (v_0)_\theta + (v_0)_\theta^2 (v_0)_{\theta  \theta } h^{-4} - (v_0)_\theta^3 h_\theta h^{-3}\\
& =o(1).
\end{align*}
This proves the claim that
$$M(v_0)(x) \approx \dfrac{(v_0)_{rr}}{(v_0)_r^2 } \approx r^{-2}(x).$$
So, we can write $v_1$ as
$$v_1 (x)=- \int_0^r \left(\dfrac{M(v_0) h}{h_r}\right) \big(\gamma (t)\big) dt \approx  -\int_0^r \left( (\dfrac{h}{h_r})_r \dfrac{ h_r}{h}\right) \big(\gamma (t)\big) dt. $$
So we see that 
\begin{gather*}
|v_1 (x)|,\ |(v_1)_\theta (x)|,\ |(v_1)_{\theta \theta} (x)|\leq C \log r(x),\\
 |(v_1)_r (x)|, |(v_1)_{r\theta} (x)|\leq C
 \end{gather*} 
 and $|(v_1)_{rr}(x)|\leq C r^{-1}(x)$. Now, a direct computation gives that (compare with \eqref{limeta})
\begin{align*}
M(v_0 +v_1) = &\Bigl[\frac{3(n-1)}{c^3}\left(\left(\frac{h}{h'}\right)'\right)^2 - \frac{(n-1)^2}{c^3}\left(\frac{h}{h'}\right)'\\
&\ -\frac{n-1}{c^3}\left(\frac{h}{h'}\right)\left(\frac{h}{h'}\right)'' \Bigr]  \left(\frac{h'}{h}\right)^4 \\
&\ +o(r^{-4}) +\Delta^{S(r)} (v_0 +v_1) - \dfrac{B(v_0 +v_1)}{1+ |\nabla (v_0 +v_1)|^2}. 
\end{align*}
Using \eqref{hsmalle1}, one can check as previously that 
$$|\Delta^{S(r)} (v_0 +v_1)|+ |\dfrac{B(v_0 +v_1)}{1+ |\nabla (v_0 +v_1)|^2}| =o(r^{-4}).$$
We deduce from this that
\begin{align*}
v_2 (x)\approx &  \int_r^\infty \Bigl[\frac{3(n-1)}{c^3}\left(\left(\frac{h}{h'}\right)'\right)^2 \\
& - \frac{(n-1)^2}{c^3}\left(\frac{h}{h'}\right)'-\frac{n-1}{c^3}\left(\frac{h}{h'}\right)\left(\frac{h}{h'}\right)'' \Bigr]  \left(\frac{h'}{h}\right)^3  dt. 
\end{align*}
By construction, we have that (i) of Proposition \ref{prop-main} holds true.
Observe that $v_2 (x) \rightarrow 0$ as $r(x)\to \infty$ therefore (ii) of Proposition \ref{prop-main} holds true for $\tilde{i}=2$. As in the previous corollary, none of the terms of $E(v_2)$ oscillate so it has a sign at infinity i.e. (iii) holds. To prove the existence of $F_\pm$, we proceed as in the previous corollary. Let us point out that  the condition $F^{\prime \prime}_{\pm}\rightarrow 0$ of Proposition \ref{prop-main} can be replaced by $F^{\prime \prime}_{\pm}/\big(1+|\nabla U_2|^2\big)\rightarrow 0$ which is satisfied in our current situation since $|\nabla U_2|\rightarrow \infty$.
\end{proof}

%%%%%%


\begin{thebibliography}{10}

\bibitem{AW}
Steven~J. Altschuler and Lang~F. Wu.
\newblock Translating surfaces of the non-parametric mean curvature flow with
  prescribed contact angle.
\newblock {\em Calc. Var. Partial Differential Equations}, 2(1):101--111, 1994.

\bibitem{CHH2}
Jean-Baptiste Casteras, Esko Heinonen, and Ilkka Holopainen.
\newblock Dirichlet problem for {$f$}-minimal graphs.
\newblock {\em J. Anal. Math.}, 138(2):917--950, 2019.

\bibitem{CSS}
Julie Clutterbuck, Oliver~C. Schn\"{u}rer, and Felix Schulze.
\newblock Stability of translating solutions to mean curvature flow.
\newblock {\em Calc. Var. Partial Differential Equations}, 29(3):281--293,
  2007.

\bibitem{davila}
Juan D\'{a}vila, Manuel del Pino, and Xuan~Hien Nguyen.
\newblock Finite topology self-translating surfaces for the mean curvature flow
  in {$\Bbb{R}^3$}.
\newblock {\em Adv. Math.}, 320:674--729, 2017.

\bibitem{LM}
Jorge H.~S. de~Lira and Francisco Mart\'{\i}n.
\newblock Translating solitons in {R}iemannian products.
\newblock {\em J. Differential Equations}, 266(12):7780--7812, 2019.

\bibitem{GT}
David Gilbarg and Neil~S. Trudinger.
\newblock {\em Elliptic partial differential equations of second order}.
\newblock Classics in Mathematics. Springer-Verlag, Berlin, 2001.
\newblock Reprint of the 1998 edition.

\bibitem{HIMW}
D.~Hoffman, T.~Ilmanen, F.~Mart\'{\i}n, and B.~White.
\newblock Graphical translators for mean curvature flow.
\newblock {\em Calc. Var. Partial Differential Equations}, 58(4):Paper No. 117,
  29, 2019.

\bibitem{HMW}
D.~Hoffman, F.~Mart\'{\i}n, and B.~White.
\newblock Scherk-like translators for mean curvature flow.
\newblock {\em J. Differential Geom.}
\newblock To appear.

\bibitem{HoVa}
Ilkka Holopainen and Aleksi V{\"a}h{\"a}kangas.
\newblock Asymptotic {D}irichlet problem on negatively curved spaces.
\newblock {\em J. Anal.}, 15:63--110, 2007.

\bibitem{nguyen1}
Xuan~Hien Nguyen.
\newblock Translating tridents.
\newblock {\em Comm. Partial Differential Equations}, 34(1-3):257--280, 2009.

\bibitem{nguyen2}
Xuan~Hien Nguyen.
\newblock Complete embedded self-translating surfaces under mean curvature
  flow.
\newblock {\em J. Geom. Anal.}, 23(3):1379--1426, 2013.

\bibitem{nguyen3}
Xuan~Hien Nguyen.
\newblock Doubly periodic self-translating surfaces for the mean curvature
  flow.
\newblock {\em Geom. Dedicata}, 174:177--185, 2015.

\bibitem{Spruck-Xiao}
Joel Spruck and Ling Xiao.
\newblock Complete translating solitons to the mean curvature flow in {$\Bbb
  R^3$} with nonnegative mean curvature.
\newblock {\em Amer. J. Math.}, 142:993--1015, 2020.

\bibitem{Wang-convex}
Xu-Jia Wang.
\newblock Convex solutions to the mean curvature flow.
\newblock {\em Ann. of Math. (2)}, 173(3):1185--1239, 2011.

\end{thebibliography}
\end{document}